\documentclass{amsart}

\usepackage{alphamacros}
\usepackage{analysismacros}
\usepackage{mymathmacros}
\usepackage{refmacros}
\usepackage{formatmacros}

\usepackage{pstricks,pst-plot,pst-math,pst-text, pstricks-add}

\usepackage[colorlinks, backref]{hyperref}

\newcommand{\Cantor}{\ensuremath{\mathfrak{C}}}
\newcommand{\Cantorp}{\ensuremath{\Cantor\pr}}
\newcommand{\Cuts}[1]{\ensuremath{\Cantor_{#1}}}
\newcommand{\Cutinf}{\Cuts{\infty}}
\newcommand{\Sigsum}{\ensuremath{\Sigof{\infseq{x}{i}{1}}}}
\newcommand{\Sigsums}[1]{\ensuremath{\Sigs{#1}}}
\newcommand{\Sigsumsn}{\Sigsums{(n)}}
\newcommand{\seqx}{\single{\xs{i}}}
\newcommand{\seqX}{\single{\Xs{i}}}
\newcommand{\sumx}{\infsum{i}{1}{\xs{i}}}
\newcommand{\setS}{\ensuremath{\mathcal{S}}}

\newcommand{\xsp}[1]{\ensuremath{x^{+}_{#1}}}
\newcommand{\xsm}[1]{\ensuremath{x^{-}_{#1}}}
\newcommand{\ysp}[1]{\ensuremath{y^{+}_{#1}}}
\newcommand{\ysm}[1]{\ensuremath{y^{-}_{#1}}}
\newcommand{\finword}{\ensuremath{\xis{1}\cdots\xis{n}}}

\title{
	Subsum Sets: Intervals, Cantor Sets, and Cantorvals
	}
\author{Zbigniew Nitecki}
\address{Tufts University, Medford, MA 02155}
\email{znitecki@tufts.edu}
\subjclass[2010]{40A05, 11B05}
\keywords{subsum set, absolutely summable, conditionally summable, Cantor set, iterated function system, Cantorval}
\thanks{I would like to thank  Aaron Brown, Keith Burns, Bill Dunham, Richard Kenyon, Micha{\l} Misiurewicz, Don Plante, Jim Propp, Charles Pugh, and Mariusz Urbanski for useful conversations in the course of preparing this paper.}
\date{\today}

\begin{document}

\begin{abstract}
Given a sequence \infseq{x}{i}{1} converging to zero, we consider the set \Sigsum{} 
of numbers which are sums of (infinite, finite, or empty) subsequences of \infseq{x}{i}{1}.  
When the original sequence is not absolutely
summable, \Sigsum{} is an unbounded closed interval which includes zero. 
When it is absolutely summable \Sigsum{}
is one of the following:
a finite union of (nontrivial) compact intervals, a Cantor set, or a ``symmetric Cantorval''.
\end{abstract}

\maketitle

Recently, while trying to think up some challenging problems for my undergraduate 
Real Analysis students, I stumbled onto an elementary and, I think, natural 
question on which I was unaware of any literature.    

One of the most counterintuitive facts in the elementary theory of series is that,
even if a sequence of real numbers \seqx{} converges to zero
(that is, it is a \textbf{null sequence}), the corresponding \emph{series}
\sumx{} might diverge.  The example of this which most of us encounter first is the \emph{harmonic sequence}
	$\recip{k}$, which converges to zero, but whose sum, surprisingly, diverges:
	$\sum_{k=1}^{\infty}\recip{k}=\infty$.
However, if we throw away enough of these terms---for example, if we throw away all reciprocals
of numbers which are not powers of two---we end up with a sequence whose corresponding series
\emph{does} converge.  We will call such a sequence a \emph{summable subsequence} 
of our original sequence, and its sum a \textbf{subsum}, of our (original) sequence.  
Then we might
ask about the set of all possible subsums of a given sequence (assuming always that 
the original sequence goes to zero): is it an interval, a finite union of intervals...
or something more complicated? 

This turns out to be a challenging question: I set out trying to answer it 
and came to a number of interesting conclusions, but was unable to give a satisfactory 
general description of such sets on my own.  However, a comment  
by Micha\l{}  Misiurewicz led me by chance to 
a 1988 paper by J. A. Guthrie and J. E. Nymann \cite{Guthrie-Nymann}, 
which gives a complete topological description of subsum sets as well as a review of some earlier 
work on the problem.%
\footnote{I was gratified to discover that the terminology I had adopted in my musings on the subject 
is almost identical to that used in most earlier writing on the subject.  The one substantial exception is the
word ``Cantorval'', coined by two Brazilians in \cite{Mendes-Oliveira}, which evokes for me
the Samba on Fat Tuesday in Rio...}
After writing up what I had found, I came across the paper of Rafe Jones \cite{RJones}, which covers some of the same 
material, but seems unaware of the definitive result of \cite{Guthrie-Nymann}.  However, it goes beyond
the assumption that the sequence converges to zero.  At the end of the present paper, I will sketch some of the
extensions suggested in Jones' paper, as well as the further extensions in the work of Mor\'an \cite{Moran1, Moran2}
referenced there.

Our story involves an interesting interplay between standard topics on sequences and series, 
some elementary number theory, and the topology of subsets of the line, which provides 
an appealing ``extra topic'' for undergraduate analysis students.

Most of our discussion will focus on \emph{positive} null sequences, which can be studied using geometric
ideas.  However, we shall see toward the end of this paper (\refer{sec}{vary}) 
how the description of \emph{all}  subsum sets
can be reduced to the corresponding description of subsum sets for positive null sequences.

\section{Positive, Non-Summable Sequences}\label{sec:posdiv}
We can think of the harmonic sequence as an infinite collection of 
dominoes of successively shorter lengths:  the $k^{th}$ domino has length \recip{k}.
The fact that the series diverges means that if we put them all end-to-end, 
we will fill out a whole half-line.  

Now suppose we are given a positive real number $r$.
Can we find a collection of dominoes from this set which exactly fill up an interval of length $r$?  

Well, we know the lengths of the dominoes converge monotonically to zero, so except for the first few, 
they are all shorter than any specified fraction of $r$.  This means that 
we can, by starting far enough down the line, 
fit a string of \emph{any specified finite number of successive
dominoes} inside the interval.  If we start with the $n^{th}$ domino and fit in as many
successive dominoes as we can (starting from the $n^{th}$), then the first domino that ``pokes out''
will certainly be shorter than \recip{n}.  In fact, if we have managed to squeeze in $k$ dominoes (starting from the $n^{th}$) but cannot fit the next one in, 
then the one that pokes out has length \recip{n+k}.
This means that the ones we \emph{can} fit fill an interval 
that is \emph{shorter} than $r$---but its length \emph{plus \recip{n+k}} is \emph{more} than $r$.
It follows that after we have squeezed in $k$ successive dominoes starting from the $n^{th}$,
we are left with an unfilled gap which is shorter than \recip{n+k}.
Now, we look for more dominoes, to fill this gap.  We start further down
our list of dominoes, finding a set of $k\pr$ successive ones, starting with the $(n\pr)^{th}$ 
(where $n\pr\gg n+k$), that fill out our gap---except for a new, smaller gap of length less than 
\recip{n\pr+k\pr}.  And we continue.  With a little more care, we can choose our starting
point at each stage so that the size of the gap is cut to less than half its current value with each
new filling.  When we are all done, we have created a subsequence of our dominoes whose 
combined total length is \emph{exactly} $r$.

Let's look back at what we did.  We didn't really use any special properties of the harmonic 
sequence in this construction, other than the fact that the lengths of the dominoes go to zero,
but their sum diverges (to infinity).  So we have a theorem:
\begin{theorem}\label{thm:posdiv}
	If \seqx{} is a positive null sequence 
	for which $\sumx=\infty$,
	then every $r>0$ is the sum of some subsequence of \seqx{}.
\end{theorem}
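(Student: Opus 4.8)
The plan is to make rigorous the greedy ``block-filling'' construction sketched above, producing a strictly increasing sequence of indices (hence a genuine subsequence) whose terms sum to exactly $r$. I would organize the construction so that at each stage the still-unfilled gap is cut to less than half its previous value, which forces the running total to converge to $r$.

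First I would isolate the single step as a lemma. Suppose at some stage we have already chosen finitely many terms, with an unfilled gap $g>0$ remaining, and let $M$ be the largest index used so far. Because $x_i\to 0$, I can choose a starting index $n>M$ large enough that $x_i<g/2$ for every $i\ge n$. Now consider the tail partial sums $P_m=\sum_{i=n}^{n+m-1}x_i$. Since the whole series diverges and the terms are positive, its tail from $n$ also diverges, so $P_m\to\infty$; as $P_0=0\le g$, there is a largest $k\ge 1$ with $P_k\le g$ (here $k\ge 1$ because $x_n<g/2<g$). I adjoin the block $x_n,\dots,x_{n+k-1}$ to the selection. The new gap is $g'=g-P_k\ge 0$, and from $P_{k+1}>g$ I get $g'<x_{n+k}<g/2$, the last inequality because $n+k\ge n$.

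Then I would iterate: starting from $g_0=r$ and no indices chosen, I apply the lemma repeatedly to obtain gaps $g_0>g_1>\cdots$ with $g_j<g_{j-1}/2$, hence $g_j<r/2^{j}\to 0$. At each stage the starting index is chosen strictly larger than every previously used index, so the chosen blocks are disjoint and occur in increasing order; together they constitute a subsequence of the original sequence. The sum of all terms selected through stage $j$ equals $r-g_j$, and since $g_j\to 0$ this subsum converges to exactly $r$. (If some $g_j$ happens to equal $0$, the construction terminates with a finite subsequence summing to $r$; otherwise it is infinite --- either outcome is acceptable.)

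The two hypotheses are each used in an essential and separate way, and pinpointing them is the crux. The divergence $\sumx=\infty$ is what guarantees that the tail partial sums $P_m$ eventually exceed $g$, so that a filling block of the required length actually exists --- without it the greedy step could stall. The null condition $x_i\to 0$ is what lets me push the starting index far enough out that every term in the new block, and in particular the overshooting term $x_{n+k}$, is smaller than $g/2$, which is exactly what drives the geometric shrinking of the gap. I expect the only real care needed is the bookkeeping that keeps the blocks disjoint and increasing so that the final selection is a legitimate subsequence; the convergence of the resulting subsum to $r$ is then immediate from $g_j\to 0$.
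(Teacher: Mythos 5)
Your proposal is correct and is essentially a careful formalization of the paper's own argument: the same greedy block-filling, with the overshooting term bounding the new gap and the starting index pushed far enough out that each gap is cut to less than half, forcing the subsums to converge to $r$. The explicit handling of the terminating case $g_j=0$ and the bookkeeping that keeps the blocks disjoint are exactly the details the paper leaves informal.
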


Actually, there is one minor technical point we need to note here.  When thinking about the 
harmonic sequence, we \emph{did} take advantage of the fact that it is decreasing. 
In general, the sequence we are looking at might be presented in an order which is not
decreasing.  Fortunately ,
for a sequence of \emph{positive} numbers, the sum (of the series) is not changed by rearranging
their order. (This was noted by Dirichlet in 1837 \cite[p. 315]{Dirichlet:Beweis} without explicit proof;
a proof can be found in many basic analysis books, for example \cite[Thm. 3.56, p. 68]{Rudin}.%
\footnote{The basic idea is that the partial sums for any ordering are themselves a strictly increasing
sequence, and any particular partial sum for \emph{one} ordering can be bracketed between two
partial sums of any \emph{other} particular order, so the two limits are the same.}).
Intuitively, the total length of a collection of dominoes set end-to-end is not changed 
if we set them down in a different order.  
This means we can work with them in a \emph{non-increasing} order: 
$\xs{k+1}\leq\xs{k}$ for every $k$.  This will be an implicit assumption in all of our reasoning, at
least while looking at positive sequences:
\begin{quote} \textbf{Standing Assumption:}
	\textit{When dealing with positive sequences, we assume (without loss of generality) that the
	given sequence is non-increasing:}
\end{quote}
\begin{equation*}
	\xs{i+1}\leq\xs{i}\text{ for all }i=1,2,\dots.
\end{equation*}

\section{Positive Summable Sequences}\label{sec:posgeom}
OK, so we have answered our question for a sequence of \emph{positive} numbers going to zero
whose sum \emph{diverges}.  What about if the sum \emph{converges}?

We start with two examples.  

First, consider the sequence of (positive integer) powers of \half
\begin{equation*}
	\xs{i}=\recip{2^{i}},\quad i=1,2,\dots
\end{equation*}
which sums to 
\begin{equation*}
	\sumx=\frac{1/2}{1-1/2}=1.
\end{equation*}
We can again picture our sequence as a collection of dominoes 
(the \ith{} has length $\left(\half\right)^{i}$);  clearly, since \emph{all} of them placed end-to-end 
fill an interval of length $1$, any subcollection will fill a shorter interval;  
that is, any subsum belongs to the interval \clint{0}{1}.
Now, expressing a number in \clint{0}{1} as a sum of (distinct) powers of \half{} 
is the same as giving its binary or \emph{base 2} expansion: to be more precise, a binary sequence
\begin{align*}
	\xi&=\single{\xis{i}}_{i=1}^{\infty}\\
	\intertext{(each \xis{i} is $0$ or $1$) corresponds to the number} 
	\xof{\xi}&=\sum_{i=1}^{\infty}\frac{\xis{i}}{2^{i}}.
\end{align*}
Every number between $0$ and $1$ has a binary expansion, so the subsum set in this case%
\footnote{We shall see later that this needs some clarification: see \refer{dfn}{subsumset}.}
 is an interval with endpoints $0$ and $1$.

Now, consider the sequence of powers of \recip{3}
\begin{align*}
	\xs{i}&=\recip{3^{i}}, \quad i=1,2,\dots\\
	\intertext{which sums to}
	\sumx&=\frac{1/3}{1-1/3}=\half.
\end{align*}
As before, any subsum belongs to the interval \clint{0}{\half}.  But on closer inspection, it becomes
clear that not every point in this interval occurs as a subsum.  For example, any subsum which
does \emph{not} involve the first term, \recip{3}, is at most equal to
\begin{align*}
	\Xs{1}\eqdef&\sum_{i>1}\recip{3^{i}}=\frac{1/9}{1-1/3}=\recip{6}\\
	\intertext{and hence belongs to the interval}
	\Js{0}&\eqdef\clint{0}{\recip{6}}\\
	\intertext{whereas any subsum which \emph{does} involve the first term belongs to}
	\Js{1}&\eqdef\clint{\recip{3}}{\half}.\\
	\intertext{Note that \Js{1} is the translate of \Js{0} by $\xs{1}=\recip{3}$, and the set of 
	subsums is actually contained in the union of two disjoint closed intervals}
	\Cuts{1}&\eqdef\Js{0}\cup\Js{1}.
\end{align*}
That is, distinguishing subsums according to whether they do or don't involve the first term
of the sequence breaks the set of all subsums into two pieces, the second a translate of the first. 
When we take account of all the possibilities for which of the first \emph{two} terms of the sequence
occur in a given subsum, we find that the set of subsums is contained in the union of four disjoint
closed intervals--two subintervals of \Js{0} and two subintervals of \Js{1}.
Of course, we can continue this process.  
A subsequence 
of \seqx{} can
be specified using the sequence $\xi=\xis{1}\xis{2}\cdots$ of zeroes and ones defined by
\begin{equation}\label{eqn:zeroone}
	\xis{k}=\begin{cases}
			   1   & \text{ if \xs{k} is included in the subsequence }, \\
			    0  & \text{ if it is not}.
		\end{cases}
\end{equation}
The sum corresponding to this subsequence is then
\begin{equation}\label{eqn:expansion}
	\xs{\xi}\eqdef\sum_{k=1}^{\infty}\xis{k}\cdot\xs{k}.
\end{equation}
For our particular example, this reads
\begin{equation*}
	\xs{\xi}=\sum_{k=1}^{\infty}\frac{\xis{k}}{3^{k}}
\end{equation*}
which is a base three expansion for \xs{\xi}.  

The intervals \Js{00},\Js{01},\Js{10} and \Js{11} result from sorting the
subsum set according to which of the first two terms of the sequence $\seqx=\infseq{x}{i}{1}$ 
are included in a given subsum--that is, according to the initial ``word'' of length $2$ in 
the defining sequence \xie.  In general, 
we can parse any subsum into an initial finite sum, \xs{\finword} determined by
the initial ``word'' $\finword$ of length $n$, and the rest of the sum, which is 
a subsum of the sequence \infseq{x}{i}{n+1} obtained by omitting the first $n$ terms of \infseq{x}{i}{1}.
Let us informally%
\footnote{A formal definition will be given shortly in \refer{dfn}{subsumset}.}
denote the subsum set of a sequence \infseq{x}{i}{1} by \Sigsum{}, and write
\begin{equation*}
	\Sigsumsn{}\eqdef\Sigof{\infseq{x}{i}{n+1}}
\end{equation*}
for the set of subsums which do not involve the first $n$ terms $\xs{1},\dots,\xs{n}$.
Then the collection of all subsums whose defining sequence \xie{} has initial word $\finword$
can be written
\begin{equation*}
	\xs{\finword}+\Sigsumsn{};
\end{equation*} 
letting the initial word of length $n$ range over all the possible \tuple{n}s 
of zeroes and ones, we fill up our subsum set:
\begin{equation}\label{eqn:partition}
	\Sigsum=\bigcup_{\finword\in\single{0,1}^{n}}
		\left(\xs{\finword}+\Sigsumsn{}\right).
\end{equation}

As before, \Sigsumsn{} is contained in the closed interval%
\footnote{$0^{n}$ denotes the word of length $n$ consisting of all zeroes.}
\begin{align*}
	\Js{0^{n}}&=\clint{0}{\Xs{n}}\\
	\intertext{where \Xs{n} is the highest sum in \Sigsumsn{}}
	\Xs{n}&=\sum_{k>n}\xs{k}\\
	\intertext{and it follows that (for each fixed $n$) our whole subsum set is contained 
		in the union of $2^{n}$ closed intervals}
	\Cuts{n}&=\bigcup_{\finword\in\single{0,1}^{n}}\Js{\finword}\\
	\intertext{where}
	\Js{\finword}&\eqdef\xs{\finword}+\Js{0^{n}}\\
		&=\clint{\xs{\finword}+0}{\xs{\finword}+\Xs{n}}.
\end{align*}

In our example, 
\begin{equation*}
	\Xs{n}=\sum_{k=n+1}^{\infty}\recip{3^{k}}=\frac{1/3^{n+1}}{1-\recip{3}}
		=\recip{2\cdot3^{n}}
\end{equation*}
so 
\begin{equation*}
	\Js{0^{n}}=\clint{0}{\recip{2\cdot3^{n}}}.
\end{equation*}

Having fixed an initial word of length $n$, we have two possibilities for the next, \st{(n+1)} entry in \xie: 
either $\xis{n+1}=0$ or $\xis{n+1}=1$.
This means that each interval \Js{\finword} of \Cuts{n} contains two subintervals associated to
initial words of length $n+1$ in \xie:
\begin{align*}
	\Js{\finword0}&=\xs{\finword}+\clint{0}{\Xs{n+1}}\\
	\intertext{and}
	\Js{\finword1}&=\xs{\finword}+\recip{3^{n+1}}+\clint{0}{\Xs{n+1}}\\
\end{align*} 
where
\begin{equation*}
	\Xs{n+1}
		=\recip{2\cdot3^{n+1}}.
\end{equation*}
The important thing to notice is that these two subintervals have the same length, \Xs{n+1}, 
and the second is a translate of the first by an amount greater than \Xs{n+1}. This means they are disjoint.
Looking a bit more closely, we note that the first subinterval \emph{starts} 
at the \emph{left} endpoint of \Js{\finword} while the second \emph{ends} at its \emph{right} endpoint.  
Thus, passing from the union \Cuts{n} of intervals determined by words of length $n$ to the union
\Cuts{n+1} of those determined by words of length $n+1$, each component interval of \Cuts{n} 
acquires a gap in its middle, separating two subintervals which are components of \Cuts{n+1}.
In fact, since $\Xs{n+1}=\recip{3}\Xs{n}$, this gap is precisely the ``middle third'' of each component.
Hence we are carrying out the construction of the middle-third Cantor set, except that we start from the interval
\clint{0}{\half} instead of \clint{0}{1}.  In this way, when we pass to the intersection 
\begin{equation*}
	\Cutinf{}=\bigcap_{n=1}^{\infty}\Cuts{n}
\end{equation*}
we obtain a version of the standard Cantor set, 
but scaled down by a factor of a half.

The argument above shows that the subsum set of the sequence of powers of \recip{3} is a Cantor set.
However, the construction of the sets \Cuts{n} and \Cutinf{} applies to \emph{any} positive summable null
sequence, with the proviso that in general, the intervals \Js{\finword} need not be disjoint---so our 
final set \Cutinf{} need not be a Cantor set.  In fact, for the powers of \half, we have
\Xs{n}=\recip{2^{n}}, and the intervals \Js{\finword} abut, so $\Cuts{n}=\clint{0}{1}$ for all $n$ 
(and hence for ``$n=\infty$'').  As we shall see, even more complicated behavior is possible 
which mixes overlap and disjointness.

In general, though, the procedure we have outlined produces the compact set \Cutinf{}, 
which is guaranteed to contain our subsum set.  
But certainly at each finite stage, the set \Cuts{n}
contains more than \Sigsum.  So, what about the intersection?---does \Sigsum{} \emph{equal} \Cutinf{}, or is it a proper subset? 

The answer to this hinges on what we mean by a ``subsequence''. 
Usually a ``subsequence'' of an infinite
sequence is understood to itself be infinite.  If we use this notion in our definition of subsums, 
we exclude any number given as a \emph{finite} sum of powers of \recip{3}---%
that is, we exclude the left endpoint of each of our intervals \Js{\xis{1}\cdots\xis{k}}.  
The resulting set is a bit awkward to describe.  So we follow a convention 
going back to S. Kakeya (whose 1914 paper \cite{Kakeya} is the first one I am aware of on this topic) and
include \emph{finite} subsequences, as well as the \emph{empty} sequence 
(whose sum we take to be zero), in our formal definition of the subsum set.
\begin{definition}\label{dfn:subsumset}
	The \deffont{subsum set} of a null sequence 
	\begin{equation*}
		\xs{1},\xs{2},\dots\to0
	\end{equation*}
	is the collection 
	\begin{equation*}
		\Sigsum{} 
	\end{equation*}
	of all numbers of the form
	\begin{equation*}
		\xs{\xi}\eqdef\sum_{k=1}^{\infty}\xis{k}\cdot\xs{k},
	\end{equation*}
	 where 
	\begin{equation*}
		\xi=\xis{1}\xis{2}\cdots
	\end{equation*}
	is any sequence of zeroes and ones for which the subsequence \finseq{\xis{i}\cdot\xs{i}}{i}{1}{\infty}
	is summable.%
\footnote{In the context of this section, where we have assumed the original sequence is positive 
and summable, every subsequence is summable.}
\end{definition}
This definition simply codifies the idea that we take sums of infinite, finite, and empty subsequences of
\infseq{x}{i}{1}.  Note that a \emph{finite} subsum corresponds to a sequence \xie{} which is 
eventually all zeroes; we shall often omit the ``tail of zeroes'' when specifying the sequence $\xi$ 
in such a situation.

Now, we have constructed a nested sequence of compact sets \Cuts{n}, each containing our subsum set; 
it follows that \Sigsum{} is contained in the compact set \Cutinf{}.  Furthermore, \Cuts{n} consists of 
intervals of length \Xs{n}, each having nonempty intersection with \Sigsum{} (for example its endpoints),
which means that all points of \Cuts{n} are within distance \Xs{n} of the set \Sigsum.  
Since we have assumed our sequence is summable, its ``tails'' \Xs{n} must converge to zero, which implies
that \Cutinf{} is the \emph{closure} of \Sigsum.  

The construction of \Cutinf{} automatically implies several properties:
\begin{itemize}
 	\item Since $\Xs{n}>0$, every component of  \Cuts{n} is an interval, and hence it has no isolated 
	points---it is \emph{perfect}.  This property persists under nested intersection, so \Cutinf{}
	is a perfect set.
	\item \Cuts{n} is a union of closed intervals \Js{\finword} of length \Xs{n}; in particular,
	each point of \Cuts{n} is within distance \Xs{n} of at least one \emph{right} endpoint and at least
	one \emph{left} endpoint of some \Js{\finword}.  Since $\Xs{n}\to0$,
	this means the right \resp{left} endpoints of the various intervals \Js{\finword} are
	dense in \Cutinf{}.
\end{itemize}

In the case of powers of \recip{3}, we have an explicit homeomorphism between the subsum set 
\Sigof{\single{\recip{3^{k}}}_{k=1}^{\infty}} and the middle-third Cantor set, telling us that this subsum
set is compact, and hence \emph{equals} \Cutinf{}.  To go beyond this example, 
we need to show that \Sigsum{} is 
closed in general.  This was done in \cite{Kakeya} by a direct argument, 
but we can finesse the general case using the example and a sneaky
trick.  

For our example (powers of \recip{3}), the sequence \xie{} for a particular subsum is a base $3$
expansion of that subsum, so points of the Cantor set are in one-to-one correspondence with sequences
\xie{} of zeroes and ones. Furthermore, this mapping is a homeomorphism (points with expansions that
agree for a long time are close to each other, and vice-versa).  But half of this also applies to a general
subsum set:  for \emph{any} sequence \seqx{}, two subsums whose defining sequences \xie{} agree for
at least $n$ places belong to the same interval \Js{\finword}, which is an interval of length \Xs{n}.
And that length, which is by definition a ``tail'' of a convergent series, goes to zero.  Thus, the mapping
taking a point of the (middle-third) Cantor set to its defining sequence \xie{} and then to the point \xs{\xi}
in our subsum set corresponding to the same sequence is continuous.  Since it is also onto, we have 
exhibited a general subsum set \Sigsum{} as a continuous image of a compact set---hence it is also compact.  From this we can conclude that
\begin{equation*}
	\Cutinf{}=\Sigsum.
\end{equation*}
Hence \Sigsum{} has the properties noted above for \Cutinf{}: it is a perfect set, and (since the left
\resp{right} endpoint of any \Js{\finword} is the sum of a finite \resp{infinite} subsequence),
both kinds of sums are dense in \Sigsum.

\Sigsum{} also has some symmetry properties.  We have already seen (fixing $n$) that  \Cuts{n} is a 
union of sets \Js{\finword} which are just translates of each other;  this means that 
for each fixed $n$ the sets $\Sigsum\cap\Js{\finword}$ are homeomorphic.  Another symmetry is the
reflection about the midpoint, given by
\begin{equation}\label{eqn:reflect}
	x\mapsto\Xs{0}-x.
\end{equation}  
To see this particular symmetry, note that when $x$ is a subsum of our sequence defined by the sequence
\xie{} of $0$'s and $1$'s, then $\Xs{0}-x$ is defined by the sequence $\tilde{\xi}_{i}$, where
$\tilde{\xi}_{i}=1-\xis{i}$---that is, $\Xs{0}-x$ is the sum of all the terms \emph{not} included in the sum
defining $x$.

We summarize%
\footnote{No pun intended.} 
these general observations in the following theorem:
\begin{theorem}\label{thm:subsumset}
 	For every summable, positive null sequence $\xs{1},\xs{2},\dots\to0$ with sum
	\begin{equation*}
		\sum_{k=0}^{\infty}\xs{i}=\Xs{0},
	\end{equation*}
	the subsum set \Sigsum{} is a perfect set with convex hull \clint{0}{\Xs{0}} which is 
	symmetric under the reflection $x\mapsto\Xs{0}-x.$
	
	Furthermore, the collection of all sums of finite subsequences (as well as the collection
	of all sums of infinite subsequences) is dense in \Sigsum.
\end{theorem}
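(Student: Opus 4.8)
The plan is to read all four assertions off the explicit geometry of the approximating unions $\Cuts{n}=\bigcup_{\finword}\Js{\finword}$, taking as foundation the identity $\Sigsum=\Cutinf$ established above. That identity---the assertion that \Sigsum{} is genuinely closed, and not merely dense in its hull \Cutinf{}---is the one step that requires real work, and it has already been secured by exhibiting \Sigsum{} as the continuous image of the middle-third Cantor set. Everything remaining is bookkeeping on the binary coding $\xi\mapsto\xs{\xi}$ and on the nested structure of the \Cuts{n}.

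First I would handle perfectness and the convex hull together. Each \Cuts{n} is a finite union of the \emph{nondegenerate} closed intervals \Js{\finword}, each of length $\Xs{n}>0$, so it has no isolated points; to see that this persists in the limit, take $p\in\Cutinf$ and note that for every $n$ the point $p$ lies in some \Js{\finword}, whose two endpoints both belong to \Sigsum{} (as a finite and an infinite subsum, respectively) and at least one of which differs from $p$ while lying within distance $\Xs{n}$ of it. Since $\Xs{n}\to0$, this supplies points of $\Sigsum=\Cutinf$ distinct from and arbitrarily close to $p$, so \Sigsum{} is perfect. For the hull, the empty subsum gives $0$ and the full subsum gives $\Xs{0}$, so both endpoints of \clint{0}{\Xs{0}} lie in \Sigsum; since every subsum satisfies $0\le\xs{\xi}\le\Xs{0}$, the set lies in \clint{0}{\Xs{0}} and its convex hull is exactly that interval.

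The reflection symmetry and the two density claims then come straight from the coding. The bit-flip involution $\xi\mapsto\tilde{\xi}$, with $\tilde{\xi}_{i}=1-\xis{i}$, sends the subsum \xs{\xi} to the subsum $\xs{\tilde{\xi}}=\Xs{0}-\xs{\xi}$ formed from the complementary terms; being an involution of \Sigsum{} onto itself, it shows \Sigsum{} is invariant under the reflection $x\mapsto\Xs{0}-x$. Finally, the left endpoint of each \Js{\finword} is the finite subsum \xs{\finword} and its right endpoint is $\xs{\finword}+\Xs{n}$, an infinite subsum; since the left \resp{right} endpoints of the \Js{\finword} were already seen to be dense in \Cutinf{} (as $\Xs{n}\to0$), the finite \resp{infinite} subsums are dense in \Sigsum. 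The sole genuine obstacle was thus the closedness $\Sigsum=\Cutinf$; granted that, the four conclusions reduce to these routine verifications.
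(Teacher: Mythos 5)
Your proposal is correct and follows essentially the same route as the paper: it leans on the compactness argument (continuous image of the Cantor set) for $\Sigsum=\Cutinf$, and then derives perfectness, the convex hull, the bit-flip reflection symmetry, and the density of finite and infinite subsums from the nested intervals \Js{\finword} exactly as the paper does. Your treatment of perfectness and of the convex hull is, if anything, slightly more explicit than the paper's.
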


The fact that \Sigsum{} is perfect was proved by Shoichi Kakeya in 1914 \cite{Kakeya} and independently by Hans Hornich in 1941 \cite{Hornich}%
\footnote{A 1948 paper by P. Kesava Menon \cite{Menon} addresses similar issues, but I find it confusing to determine just what is being proved.}  The reflection symmetry of subsum sets 
was noted by Hornich, as well as by Joseph Nymann and Ricardo Saenz in \cite{Nymann-Saenz}. 

\section{Terms vs. Tails: Subsum sets of geometric and $p$-series}\label{sec:termstails}
In the examples studied so far, we have observed two extremes of behavior.  
For the powers of \recip{3},  the intervals \Js{\finword} for any fixed $n$ are disjoint, and in the limit we
obtain a Cantor set as \Cutinf.  But for powers of \recip{2}, these intervals touch, as a result of which all
the sets \Cuts{n} are the same, and \Cutinf{} is an interval.  To understand the basis of these phenomena
in general, we examine the recursive step in the construction of \Cutinf.

When we go from \Cuts{n-1} to \Cuts{n}, each interval \Js{\xi} (for a fixed
$(n-1)$-word \xie) is replaced by  the union of two subintervals, 
corresponding to the $n$-words $\xi^{-}=\xi0$
and $\xi^{+}=\xi1$ obtained by appending $0$ \resp{$1$} to \xie.  
Both of these subintervals have length equal to 
the \nth{} \emph{tail} \Xs{n}, and the second is the translate of the first by the \nth{} \emph{term} 
\xs{n}.  Furthermore, the right \resp{left} endpoint of \Js{\xi}
is the same as the right \resp{left} endpoint of \Js{\xi^{-}} \resp{\Js{\xi^{+}}}.
Thus we can distinguish two cases:
\begin{description}
	\item[Term exceeds Tail] 
	If $\xs{n}>\Xs{n}$, the two intervals are disjoint, so \Js{\xi} in \Cuts{n} is replaced
	by a \emph{disjoint} union of subintervals in \Cuts{n+1}; that is, \Js{\xi} breaks into the 
	\emph{disjoint} union of \Js{\xi^{-}} and \Js{\xi^{+}}, leaving a ``gap''  of size $\xs{n}-\Xs{n}$
	in the middle.
	\item[Tail bounds Term]
	 If $\xs{n}\leq\Xs{n}$, the two intervals share at least one point, so their union equals
	\Js{\xi}.
\end{description}
Note that with $n$ fixed, the intervals \Js{\xi} obey the same rule for every $n$-word. 
Also remember that they all have length \Xs{n}, which goes to zero (since \Xs{n} is the tail of a 
convergent series). Note also that in the first case, \Cuts{n+1} is obtained from \Cuts{n} by deleting an interval
of length $\xs{n+1}-\Xs{n+1}$ from each of its $2^{n}$ components. Since these all have length \Xs{n}, the
total length of \Cuts{n+1} is $2^{n}\{\Xs{n}-(\xs{n+1}-\Xs{n+1})\}=2^{n+1}\Xs{n+1} $.
This shows
\begin{theorem}\label{thm:event}
	Suppose \seqx{} is a summable sequence of positive real numbers.  Then
	\begin{enumerate}
		\item\label{thm:event1}
		 If $\xs{n}>\Xs{n}$ (\ie{} the term exceeds the tail) for every $n$, 
		 then for each $n$, \Cuts{n} is the disjoint union of the $2^{n}$ closed intervals \Js{\xi}
		 as \xie{} ranges over the words of length $n$.  It follows that
		 $\Cuts{\infty}=\Sigsum$ is a Cantor set whose Lebesgue measure is
		 $
		 	\lim_{n\to\infty} 2^{k}\Xs{k}.
		 $
		\item\label{thm:event2}
		 If $\xs{n}\leq\Xs{n}$ (\ie{} the tail bounds the term) for every $n$, 
		 then for each $n$, $\Cuts{n}=\Cuts{0}=\clint{0}{\Xs{0}}$, so
		 $\Cuts{\infty}=\Sigsum$ is the interval
		\clint{0}{\Xs{0}}.
	\end{enumerate}
\end{theorem}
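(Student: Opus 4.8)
The plan is to obtain both parts by a straightforward induction on $n$, using the two-case dichotomy (``Term exceeds Tail'' versus ``Tail bounds Term'') established just before the statement, and then to read off the topological type of $\Cutinf$ from the resulting structure of the $\Cuts{n}$. Throughout I would invoke the identity $\Cutinf=\Sigsum$ proved above, so that statements about $\Cutinf$ transfer immediately to the subsum set.

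For the second case I would show that the passage from $\Cuts{n-1}$ to $\Cuts{n}$ changes nothing. When $\xs{n}\le\Xs{n}$, the ``Tail bounds Term'' analysis says that each parent interval $\Js{\xi}$ (for a word $\xi$ of length $n-1$) is recovered exactly as the union of its two children: the left child $\Js{\xi^{-}}$ begins at the left endpoint of $\Js{\xi}$, the right child $\Js{\xi^{+}}$ ends at its right endpoint (using $\Xs{n-1}=\xs{n}+\Xs{n}$), and the inequality $\xs{n}\le\Xs{n}$ is exactly what forces the two to meet. Taking the union over all words of length $n-1$ gives $\Cuts{n}=\Cuts{n-1}$, and since $\Cuts{0}=\clint{0}{\Xs{0}}$, induction yields $\Cuts{n}=\clint{0}{\Xs{0}}$ for every $n$; hence $\Cutinf=\Sigsum=\clint{0}{\Xs{0}}$.

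For the first case the same dichotomy, now in its ``Term exceeds Tail'' form, says that each parent $\Js{\xi}$ splits into two disjoint children. An induction on $n$ then shows $\Cuts{n}$ is a disjoint union of $2^{n}$ closed intervals: children of distinct parents cannot meet, since each child lies inside its parent and the parents are pairwise disjoint by the inductive hypothesis. This gives the displayed claim about $\Cuts{n}$. To conclude that $\Cutinf=\Sigsum$ is a Cantor set I would check the four hypotheses of the standard topological characterization (a nonempty, compact, perfect, totally disconnected metric space). Nonemptiness (it contains $0$), compactness (a nested intersection of compact sets), and perfectness (already recorded for $\Cutinf$) are immediate. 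The one point needing a word is total disconnectedness: every component of $\Cuts{n}$ has length $\Xs{n}$, and $\Xs{n}\to0$ since it is the tail of a convergent series, so given distinct $p,q\in\Cutinf$ one picks $n$ with $\Xs{n}<|p-q|$; then $p$ and $q$ lie in different components of $\Cuts{n}$ and are separated by a gap, so $\Cutinf$ contains no nondegenerate interval.

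The argument is largely bookkeeping, and I do not expect a serious obstacle: the geometric dichotomy preceding the statement has already isolated the single inequality ($\xs{n}$ against $\Xs{n}$) that controls the splitting at each stage, and each conclusion is simply the accumulation of that inequality over all $n$. The only steps requiring real care are the disjointness of children of distinct parents in the first case, and the verification of total disconnectedness needed to invoke the Cantor-set characterization.
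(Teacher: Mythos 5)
Your proposal is correct and follows essentially the same route as the paper: the paper's ``proof'' consists precisely of the term-versus-tail dichotomy applied at every stage, together with the remark that all components of $\Cuts{n}$ have length $\Xs{n}\to0$, from which the theorem is said to follow. You have merely filled in the bookkeeping (the induction, disjointness of children of distinct parents, and the verification of total disconnectedness) that the paper leaves implicit.
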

These properties were established by Hornich \cite{Hornich}.  Kakeya \cite{Kakeya} noted the second property
(in fact that the tail always bounds the term if and only if the subsum set is an interval---cf. 
our \refer{lem}{gaps} and \refer{prop}{gaps}).

For a geometric sequence with first term $a$ and ratio%
\footnote{(that is, a geometric sequence whose terms are positive and tend to zero)} $\rho\in\opint{0}{1}$, we know that
\begin{align*}
	\xs{n}&=a\rho^{n-1}\\
	\intertext{and}
	\Xs{n}&=\frac{a\rho^{n}}{1-\rho}\\
	\intertext{so}
	\frac{\Xs{n}}{\xs{n}}&=\frac{\rho}{1-\rho}
\end{align*}
which is at least $1$ for $\rho\geq\half$ and strictly less than $1$ for $0<\rho<\half$.
This immediately gives us a description of \Sigsum{} for any positive geometric sequence.%
\footnote{Jones \cite[Prop. 3.3]{RJones} gives a kind of extension of the first case of this corollary, in the spirit of the
ratio test for convergence.}
\begin{corollary}
	If \single{\xs{i}=a\rho^{i-1}} is a geometric sequence with initial term $a>0$ and ratio
	$\rho\in\opint{0}{1}$, then \Sigsum{} is
	\begin{enumerate}
		\item a Cantor set of measure zero for $0<\rho<\half$
		\item the interval \clint{0}{\Xs{0}=\frac{a}{1-\rho}} for $\half\leq\rho<1$.
	\end{enumerate}
\end{corollary}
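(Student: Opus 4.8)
The plan is to read the corollary off \refer{thm}{event} directly. That theorem already splits summable positive null sequences into a ``Cantor set'' case and an ``interval'' case according to whether the term exceeds the tail, or the tail bounds the term, at \emph{every} stage $n$; for a geometric sequence these two hypotheses correspond exactly to the two stated ranges of $\rho$. So essentially all the work has been done, and what remains is a single uniform comparison.

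First I would check that $\xs{i}=a\rho^{i-1}$ falls under \refer{thm}{event} at all: for $a>0$ and $\rho\in\opint{0}{1}$ the sequence is positive, tends to zero, is summable (a convergent geometric series), and is non-increasing, so both our standing assumption and the hypotheses of \refer{thm}{event} hold. The decisive observation is the one recorded in the display just before the corollary, namely that the tail-to-term ratio
\[
	\frac{\Xs{n}}{\xs{n}}=\frac{\rho}{1-\rho}
\]
is \emph{independent of $n$}. This constancy is precisely what \refer{thm}{event} needs, since that theorem requires the term/tail comparison to point the same way for all $n$ simultaneously; for a geometric sequence it automatically does, because the comparison is literally the same inequality at every stage.

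Finally I would split on this constant value. Since $\frac{\rho}{1-\rho}<1$ exactly when $\rho<\half$, in the range $0<\rho<\half$ we have $\xs{n}>\Xs{n}$ for every $n$, and case (1) of \refer{thm}{event} gives that \Sigsum{} is a Cantor set; in the range $\half\leq\rho<1$ we have $\xs{n}\leq\Xs{n}$ for every $n$, and case (2) gives the interval $\clint{0}{\Xs{0}}$ with $\Xs{0}=\frac{a}{1-\rho}$. The only point requiring any care---and it is hardly an obstacle---is the boundary $\rho=\half$, where the term equals the tail ($\xs{n}=\Xs{n}$); since equality falls under the ``tail bounds term'' alternative $\xs{n}\leq\Xs{n}$, the value $\rho=\half$ belongs to the interval case, consistent with the powers-of-\half{} example yielding the full interval. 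There is no genuine difficulty here: the entire content of the corollary is the $n$-independence of the ratio, which collapses the two hypotheses of \refer{thm}{event} into the two ranges of $\rho$.
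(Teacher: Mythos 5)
Your proposal is correct and follows the same route as the paper, which derives the corollary by computing the constant ratio $\Xs{n}/\xs{n}=\rho/(1-\rho)$ just before the statement and then invoking \refer{thm}{event} on the two ranges of $\rho$. Your extra care about the boundary case $\rho=\half$ and the verification of the hypotheses are consistent with, and slightly more explicit than, the paper's one-line deduction.
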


\refer{thm}{event} tells us what happens when only one of the two possible relations 
between the terms and the tails occurs.  What about if both occur, but one of them occurs \emph{eventually}?%
\footnote{A property is said to hold \emph{eventually} for a sequence if there is some place $K$ in the 
sequence so that the property holds for \emph{all} later terms---or equivalently, if the property fails to
hold for at most a finite number of terms. } 

As an example, consider the sequence starting with $2$ and then followed by the powers of \half.
We already know that the sequence starting from the second term (\ie{}, just the powers of \half{}) 
has subsum set \clint{0}{1}, and it follows from \refer{eqn}{partition} that the full subsum set is
\begin{equation*}
	\Cutinf=\clint{0}{1}\cup\left(2+\clint{0}{1}\right) =\clint{0}{1}\cup\clint{2}{3}.
\end{equation*}
These two intervals are disjoint because the first term, $\xs{1}=2$, is greater than the first tail, $\Xs{1}=1$.

In general, if the tail bounds the term after the $N^{th}$  place
\begin{equation*}
	\xs{k}\leq\Xs{k}\text{ for }k>N
\end{equation*}
then \refer{thm}{event2} applied to the sequence starting after position $N$ tells us that
\begin{equation*}
	\Sigsums{(N)}=\clint{0}{\Xs{N}}
\end{equation*}
and then \refer{eqn}{partition} tells us that \Sigsum{} is the union of $2^{N}$ closed intervals,
which means, allowing for some overlaps between them, that it is the \emph{disjoint} union of 
\emph{at most} $2^{N}$ intervals. 
Furthermore, if the term exceeds the tail for \emph{all} of the first $K$ places
\begin{equation*}
	\xs{k}>\Xs{k}\text{ for }k=1,2,...,K
\end{equation*}
then the intervals \Js{\finword} are all disjoint, so \Cuts{K} consists of $2^{K}$ disjoint intervals. 
So in this case $\Cutinf=\Cuts{N}$ has \emph{at least} $2^{K}$ components.
Summarizing, we have

\begin{prop}\label{prop:eventtailbds}
	Suppose \seqx{} is a positive, summable null sequence.
	\begin{enumerate}
		\item\label{prop:eventtailbds1} 
		If the tail bounds the term eventually, then \Cutinf{} is a finite union of closed intervals.
		
		\item\label{prop:eventtailbds2} 
		In particular, if the tail bounds the term for all $k>N$ then $\Cutinf=\Cuts{N}$  consists of at most
		$2^{N}$ disjoint closed intervals.  
		\item\label{prop:eventtailbds3} 
		If in addition the term exceeds the tail for $k=1,\dots,K$,
		then \Cutinf{} consists of at least $2^{K}$ disjoint closed intervals.
	\end{enumerate}
\end{prop}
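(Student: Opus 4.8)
The plan is to treat part (\ref{prop:eventtailbds2}) as the engine and deduce the other two from it. First I would unwind what the hypothesis ``the tail bounds the term for all $k>N$'' says about the shifted sequence \infseq{x}{i}{N+1}: its $j^{th}$ term is $\xs{N+j}$ and its $j^{th}$ tail is $\Xs{N+j}$, so the condition $\xs{k}\leq\Xs{k}$ for every $k>N$ is exactly the assertion that the tail bounds the term at \emph{every} position of the shifted sequence. Then \refer{thm}{event2}, applied to \infseq{x}{i}{N+1}, gives $\Sigsums{(N)}=\clint{0}{\Xs{N}}$. Feeding this into the partition formula \refer{eqn}{partition} over the words of length $N$,
\[
\Sigsum=\bigcup_{\finword\in\single{0,1}^{N}}\left(\xs{\finword}+\clint{0}{\Xs{N}}\right)=\bigcup_{\finword\in\single{0,1}^{N}}\Js{\finword}=\Cuts{N},
\]
which exhibits \Sigsum{} as a union of $2^{N}$ closed intervals. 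Since a union of closed intervals that overlap or abut is again a closed interval, merging overlaps leaves at most $2^{N}$ pairwise-disjoint closed intervals; recalling that $\Cutinf=\Sigsum$ then gives $\Cutinf=\Cuts{N}$ and finishes part (\ref{prop:eventtailbds2}).

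Part (\ref{prop:eventtailbds1}) is immediate afterward: saying the tail bounds the term \emph{eventually} is precisely saying that some such $N$ exists, so \Cutinf{} is a finite union of closed intervals.

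For part (\ref{prop:eventtailbds3}) I would exploit the ``Term exceeds Tail'' dichotomy in the opposite direction. The hypothesis $\xs{k}>\Xs{k}$ for $k=1,\dots,K$ means that at each of the first $K$ splitting steps every interval \Js{\xi} breaks into the \emph{disjoint} union of \Js{\xi0} and \Js{\xi1}; iterating, \Cuts{K} is a disjoint union of $2^{K}$ closed intervals separated by genuine gaps. (Here $K\leq N$ is automatic, since no index can satisfy both $\xs{k}>\Xs{k}$ and $\xs{k}\leq\Xs{k}$.) Because $\Cutinf\subseteq\Cuts{K}$, the set \Cutinf{} contains no point of these gaps, so it inherits all $K$-level separations. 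The remaining point is that \Cutinf{} actually \emph{meets} each of the $2^{K}$ components \Js{\finword}: the left endpoint $\xs{\finword}$ is a finite subsum, and the right endpoint $\xs{\finword}+\Xs{K}$ is the subsum using those same chosen first terms together with the entire tail, so both lie in $\Sigsum=\Cutinf$. Since the intervals \Js{\finword} are pairwise separated by gaps disjoint from \Cutinf, these forced points fall into distinct components, giving at least $2^{K}$ components.

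The genuinely delicate step is the reduction in part (\ref{prop:eventtailbds2}): one must check that the ``tail bounds the term'' condition transfers correctly to the shifted sequence (the indexing of terms and tails shifts in lockstep, which is exactly what makes it work) and, more importantly, that the resulting $\Sigsums{(N)}$ is the \emph{full} interval \clint{0}{\Xs{N}} rather than merely a subset of it---this is the content of \refer{thm}{event2} and ultimately relies on the tails going to zero. Once that reduction is secured, everything else is bookkeeping; the only care needed in (\ref{prop:eventtailbds3}) is to confirm that the $K$-level gaps of \Cuts{K} survive in \Cutinf{} and that each surviving component genuinely contains a subsum, so that the lower bound of $2^{K}$ components is honest.
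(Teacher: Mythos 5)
Your proposal is correct and follows essentially the same route as the paper: apply \refer{thm}{event}(2) to the shifted sequence \infseq{x}{i}{N+1} to get $\Sigsums{(N)}=\clint{0}{\Xs{N}}$, feed that into \refer{eqn}{partition} for the upper bound, and use the disjointness of the level-$K$ intervals \Js{\finword} for the lower bound. Your added check that each of the $2^{K}$ separated components actually contains points of $\Sigsum$ (via the endpoint subsums) is a detail the paper leaves implicit, but it changes nothing essential.
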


As an example, consider  the \emph{$p$-sequence}
\begin{equation*}
	\xs{k}=\recip{k^{p}}
\end{equation*}
where $p>1$ is a fixed exponent.  The precise value of the \nth{} tail \Xs{n} is hard to determine,
but we can take advantage of the standard proof of summability (that is, the integral test) to estimate
it and so try to check which terms exceed the associated  tails and which tails bound the terms.  

	\begin{figure}[htbp]
	\begin{center}
		\begin{pspicture}(-0.5,-0.5)(4.0,5.0)
			\psline{<->}(0,5)(0,0)(4.0,0)
			\psplot[linewidth=2pt]{1}{2.5}{3 x dup mul div}
			\psplot[linestyle=dashed,linewidth=2pt]{2.5}{4.0}{3 x dup mul div}
			\psplot[linestyle=dashed,linewidth=2pt]{0.8}{1.0}{3 x dup mul div}
			
			\pspolygon[linestyle=none,fillstyle=solid, fillcolor=lightgray]%
				(1,0)(1,1.33)(1.5,1.33)(1.5,0.75)(2,0.75)(2,0.48)(2.5,0.48)(2.5,0)
			\psline(1,0)(1,3)(1.5,3)(1.5,1.33)
			\psline(1.5,0)(1.5,1.33)(2,1.33)(2,0.75)
			\psline(2,0)(2,0.75)(2.5,0.75)(2.5,0.48)
			\psline(2.5,0)(2.5,0.48)
			
			\uput[r](1.5,3){\recip{k^{p}}}
			\uput[r](2.0,1.33){\recip{(k+1)^{p}}}
			\uput[dl](1,0){\recip{k^{p}}}
			\uput[d](1.5,0){\recip{(k+1)^{p}}}
			\rput(3.0,4.0){$y=\recip{x^{p}}$}
		\end{pspicture}	
	\caption{Integral Test for $p$-series}
	\label{fig:integ}
	\end{center}
	\end{figure}
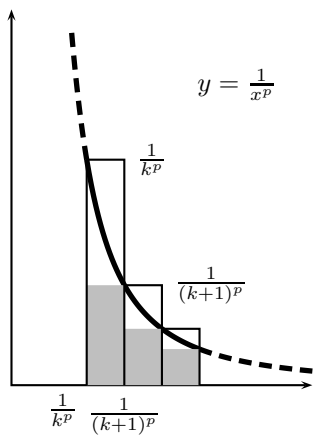

From \refer{fig}{integ} we obtain the estimates
	\begin{equation*}
		\int_{n+1}^{\infty}\frac{dx}{x^{p}}
		<\sum_{k=n+1}^{\infty}\recip{k^{p}}
		<\int_{n}^{\infty}\frac{dx}{x^{p}}.
	\end{equation*}
Carrying out the integration on 
either side, we have 
	\begin{equation}\label{eqn:estimate}
		\frac{(n+1)^{1-p}}{p-1}<\Xs{n}<\frac{n^{1-p}}{p-1}.
	\end{equation}
Thus we can guarantee that the \nth{} tail exceeds the \nth{} term
	\begin{align*}
		\xs{n}&<\Xs{n}\\
		\intertext{whenever}
		\recip{n^{p}}&\leq\frac{(n+1)^{1-p}}{p-1}.
		\intertext{a condition which can be rewritten}
		p-1&\leq(n+1)\left(\frac{n}{n+1}\right)^{p}.
	\end{align*}
Fixing $p>1$, the fraction on the right converges to $1$, while the first factor goes
to infinity, so (for a given exponent $p$) the \emph{the \nth{} tail bounds the \nth{} term eventually}.

We leave it to the reader to check that the function 
\begin{equation*}
	\fsof{p}{x}=(x+1)\left(\frac{x}{x+1}\right)^{p}
\end{equation*}
is strictly increasing.

However, the condition
	\begin{align*}
		\xs{n}&>\Xs{n}\\
		\intertext{is guaranteed to hold whenever}
		\frac{n^{1-p}}{p-1}&\leq\recip{n^{p}}\\
		\intertext{or}
		p-1&\geq n:\\
	\end{align*}
\emph{the \nth{} term exceeds the \nth{} tail at least for}%
\footnote{$\lfloor p\rfloor$ denotes the highest integer $\leq p$} 
$n\leq K\eqdef\lfloor p\rfloor-1$.
We then have
\begin{corollary}
	The subsum set of a summable $p$-sequence is a finite union of disjoint closed intervals.
	The number of these intervals is between $2^{K}$ and $2^{N}$, where
	\begin{itemize}
		\item $K$ is the highest integer less than or equal to $p-1$, and
		\item $N$ is the least integer such that
		\begin{equation*}
			p-1\leq(N+1)\left(\frac{N}{N+1}\right)^{p}.
		\end{equation*}
	\end{itemize}
\end{corollary}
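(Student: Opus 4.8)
The plan is to read the corollary off from \refer{prop}{eventtailbds} once I have pinned down, for the $p$-sequence $\xs{k}=\recip{k^{p}}$, the two threshold indices $K$ and $N$ that govern the competition between terms and tails. I would begin by noting that for $p>1$ the sequence is positive, non-increasing, summable, and null, so it satisfies the hypotheses of that proposition, and that all the needed numerical comparisons are already packaged in the estimate \refer{eqn}{estimate}, namely $\frac{(n+1)^{1-p}}{p-1}<\Xs{n}<\frac{n^{1-p}}{p-1}$.

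First I would record that the tail bounds the term from place $N$ onward. Combining $\xs{n}=\recip{n^{p}}$ with the \emph{lower} estimate for $\Xs{n}$, the relation $\xs{n}<\Xs{n}$ is guaranteed as soon as $\recip{n^{p}}\leq\frac{(n+1)^{1-p}}{p-1}$, which rearranges to $p-1\leq\fsof{p}{n}$ with $\fsof{p}{x}=(x+1)\left(\frac{x}{x+1}\right)^{p}$. Since $\fsof{p}{x}$ is strictly increasing and tends to infinity, there is a least integer $N$ with $p-1\leq\fsof{p}{N}$, and the inequality then persists for every $n\geq N$. Hence the tail bounds the term for all $k>N$, so \refer{prop}{eventtailbds1} and \refer{prop}{eventtailbds2} apply: \Sigsum{} is a finite union of disjoint closed intervals, and in fact $\Cutinf=\Cuts{N}$ has at most $2^{N}$ components.

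Next I would establish the complementary lower count. Using instead the \emph{upper} estimate for $\Xs{n}$, the relation $\xs{n}>\Xs{n}$ holds whenever $\frac{n^{1-p}}{p-1}\leq\recip{n^{p}}$, which simplifies to $n\leq p-1$, that is, for every $n\leq K\eqdef\lfloor p\rfloor-1$. Thus the term exceeds the tail for $k=1,\dots,K$, and \refer{prop}{eventtailbds3} guarantees that \Cutinf{} has at least $2^{K}$ disjoint closed components. Combining the two counts gives exactly the assertion that the number of components lies between $2^{K}$ and $2^{N}$.

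The only step I expect to carry genuine content is the strict monotonicity of $\fsof{p}{x}$ (the fact deferred to the reader just before the corollary): without it I would know only that the tail-bounds-term inequality holds at the single index $N$, not that it persists for all larger indices, and it is precisely that persistence which lets me invoke \refer{thm}{event2}, through \refer{prop}{eventtailbds2}, to collapse the construction at level $N$. Everything else is bookkeeping, but I would be careful about the one-sidedness of \refer{eqn}{estimate}: the \emph{lower} bound for $\Xs{n}$ must be used to force $\xs{n}<\Xs{n}$ and the \emph{upper} bound to force $\xs{n}>\Xs{n}$, so that each comparison points in the direction needed to apply the corresponding part of \refer{prop}{eventtailbds}.
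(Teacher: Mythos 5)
Your argument is correct and is essentially the paper's own: the integral-test estimate \eqref{eqn:estimate} is used one-sidedly in exactly the same way (the lower bound on \Xs{n} to force $\xs{n}<\Xs{n}$ for $n\geq N$, the upper bound to force $\xs{n}>\Xs{n}$ for $n\leq p-1$), with the monotonicity of $\fsof{p}{x}$ supplying the eventual persistence needed to invoke \refer{prop}{eventtailbds}. Nothing is missing; your emphasis on which side of the estimate drives which comparison is precisely the point the paper relies on.
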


\refer{prop}{eventtailbds} takes care of sequences for which the tail eventually bounds the term.
The situation is more complicated when the term eventually exceeds the tail, but not immediately.

If at some stage the term exceeds the tail, it is still true that each of the intervals \Js{\finword} 
will split into two subintervals separated by a ``gap''.
However, if the tail bounded the term at some previous stage, we can no longer assume that the
intervals which are splitting are disjoint: in principle the ``gap'' created when one of them
splits can be covered over by part of another one, so that (at least as far as this part of the set is concerned)
no new gap is created in \Cuts{n+1}. 

An example of this phenomenon is the sequence
$
	\frac{2}{5},\frac{9}{25},\frac{12}{125},\frac{54}{625},\dots
	$
defined by%
\footnote{We shall see how this mysterious sequence was created in \refer{sec}{bigeom}.}
\begin{equation}\label{eqn:alternate}
	\begin{split}
		\xs{2k}&=\frac{9\cdot6^{k-1}}{5^{2k}}\\
		\xs{2k+1}&=\frac{2\cdot6^{k}}{5^{2k+1}}.
	\end{split}
\end{equation}
This sequence is summable, with
\begin{equation*}
	\Xs{0}=\sum_{k=0}^{\infty}\frac{2\cdot6^{k}}{5^{2k+1}}+\sum_{k=1}^{\infty}\frac{9\cdot6^{k-1}}{5^{2k}}
		=1
\end{equation*}
and the first four tails are
\begin{equation*}
	\Xs{1}=\frac{3}{5},\quad
	\Xs{2}=\frac{6}{25},\quad
	\Xs{3}=\frac{18}{125},\quad
	\Xs{4}=\frac{36}{625}.
\end{equation*}
In general, the \emph{even}-numbered \emph{tails} exceed the corresponding \emph{terms}, 
but the \emph{odd}-numbered \emph{terms} exceed the corresponding \emph{tails}.
In particular, in the passage from \Cuts{2n} to \Cuts{2n+1}, each interval breaks into two overlapping intervals, so
$\Cuts{2n+1}=\Cuts{2n}$.  However, the passage from an odd-numbered set to an even-numbered set is more
complicated: it is still true that each of the intervals \Js{\xi} is replaced by two disjoint subintervals, but the
``gap'' this produces is sometimes covered by one of the  subintervals coming from a different \Js{\xi\pr}.
For example, \Cuts{3} has three components:
\begin{equation*}
	\Js{000}\cup\Js{001}=\clint{0}{\frac{30}{125}},
	\Js{010}\cup\Js{011}\cup\Js{100}\cup\Js{101}=\clint{\frac{45}{125}}{\frac{80}{125}},
	\Js{110}\cup\Js{111}=\clint{\frac{95}{125}}{1}.
\end{equation*}
A straightforward but tedious calculation shows that each of the two end components breaks into 
three components--for example the left component becomes
\begin{equation*}
	\left\{\Js{0000}=\clint{0}{\frac{36}{625}}\right\}\cup
	\left\{\Js{0001}\cup\Js{0010}=\clint{\frac{54}{625}}{\frac{96}{625}}\right\}\cup
	\left\{\Js{0000}=\clint{\frac{114}{625}}{\frac{150}{625}}\right\}
\end{equation*}
--but the middle component remains unchanged. 
Nonetheless, the components of \Cuts{n} do appear to keep breaking up into subintervals,
suggesting that at the end there will be infinitely many components to \Sigsum.
In fact, this turns out to be true.  
To see why, we need to study what happens at the far left of \Cuts{n} when the term exceeds the tail.

Every interval \Js{\xi} 
($\xi$ a word of length $n$) is a translate to the right of the leftmost interval \Js{0^{n}} by \xs{\xi}. 
Since we have assumed the sequence is non-increasing, the shortest of these translations
is given by \xs{n}.  Thus for any $n$ the interval \ropint{0}{\xs{n}} 
intersects \Js{0^{n}}
but is disjoint from all the other intervals \Js{\xi} (\xie{} a word of length $n$) making up \Cuts{n}.
Suppose now that the \nth{} term exceeds the \nth{} tail ($\xs{n}>\Xs{n}$),
so that  \clint{0}{\xs{n-1}} breaks into two subintervals, \clint{0}{\Xs{n}} and \clint{\xs{n}}{\xs{n}+\Xs{n}}.
The only word \xie{} of length $n-1$ for which \Js{\xi} intersects \clint{0}{\xs{n-1}} is $\xi=0^{n}$.
Thus, the ``gap'' \opint{\Xs{n}}{\xs{n}} introduced into \Js{0^{n}} when it breaks up 
into \Js{0^{n}0=0^{n+1}} and \Js{0^{n}1}
becomes a gap in \Cuts{n}.  

A similar argument applies when \xs{\xi} is the left endpoint of some component of \Cuts{n-1};
the easiest way to see this is to translate the whole picture using  \xs{\xi} and note that 
to the left of \xs{\xi} there is a larger ``gap'' coming from some earlier separation.
Finally, we can use the symmetry of \Cuts{\infty} under $x\mapsto\Xs{0}-x$ established
in \refer{thm}{subsumset} to draw the same conclusion for the \emph{right} endpoints of components
of \Cuts{n}.  This gives us

\begin{lemma}\label{lem:gaps}
	Suppose \seqx{} is a positive, non-increasing summable sequence.%
	\footnote{Every positive sequence can be rewritten in non-increasing order 
	without changing the subsum set. However, the sequence of tails---and hence, presumably,
	the times when the term exceeds the tail---is certainly affected by such a reordering.
	It is critical for our argument that the sequence be given in non-increasing order before this
	condition is checked.
	} 
	If at stage $n$ the term exceeds the tail
	\begin{equation*}
		\xs{n}>\Xs{n}
	\end{equation*}
	and \clint{a}{b} is a component of \Cuts{n-1}, then \clint{a}{a+\Xs{n}} and \clint{b-\Xs{n}}{b}
	are disjoint components of \Cuts{n}.
\end{lemma}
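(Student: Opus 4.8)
The plan is to prove the statement for the \emph{left} endpoint---that \clint{a}{a+\Xs{n}} is a component of \Cuts{n}---and then get the right endpoint for free by symmetry, leaving only disjointness to check. Disjointness is the painless part: a component \clint{a}{b} of \Cuts{n-1} contains at least one full interval of the form \Js{\xi} (with $\xi$ a word of length $n-1$), which has length $\Xs{n-1}=\xs{n}+\Xs{n}$; since $\xs{n}>\Xs{n}$ this gives $b-a\geq\Xs{n-1}>2\Xs{n}$, so $a+\Xs{n}<b-\Xs{n}$ and the two sub-intervals cannot meet. For the symmetry reduction I would observe that the reflection $x\mapsto\Xs{0}-x$ carries each \Js{\xi} exactly onto the interval indexed by the complementary word (replace every digit $\xis{k}$ by $1-\xis{k}$ in the first $n$ places), so it is a symmetry of the whole finite stage \Cuts{n}, not merely of \Sigsum{} as in \refer{thm}{subsumset}. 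Applying it to \clint{a}{b} turns the assertion about \clint{b-\Xs{n}}{b} into the left-endpoint assertion for the reflected component. Thus everything reduces to the left endpoint.

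I would first settle the component containing $0$, which is the clean core of the whole argument. As $\xi$ ranges over words of length $n$, the shift $\xs{\xi}$ runs through the subset sums $\sum_{k\leq n}\xis{k}\xs{k}$, and because the sequence is non-increasing the smallest \emph{positive} such sum is $\xs{n}$ itself. Hence the half-open interval \ropint{0}{\xs{n}} meets the leftmost interval $\Js{0^{n}}=\clint{0}{\Xs{n}}$ but is disjoint from every other \Js{\xi}. Since $\xs{n}>\Xs{n}$, this forces the gap \opint{\Xs{n}}{\xs{n}} to be free of \Cuts{n}, so \clint{0}{\Xs{n}} is bounded on the right by a genuine gap and is a component.

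For a general component \clint{a}{b} I would write its left endpoint as $a=\xs{\xi}$ for a word $\xi$ of length $n-1$ and try to reproduce the configuration at $0$ by translating by $a$. The left side is automatic: since \clint{a}{b} is a component of \Cuts{n-1} there is a gap of \Cuts{n-1} immediately to the left of $a$, and it survives in the smaller set $\Cuts{n}\subseteq\Cuts{n-1}$. The interval $\clint{a}{a+\Xs{n}}=\Js{\xi0}$ certainly lies in \Cuts{n}. The whole difficulty is on the right: I must rule out that some other stage-$n$ interval pokes into the freshly opened gap \opint{a+\Xs{n}}{a+\xs{n}}. The intervals whose word ends in $1$, together with all intervals lying below $a$, are controlled by the left gap (a short computation with $\Xs{n-1}>\xs{n}$ shows each either stays strictly below $a$ or starts no earlier than $a+\xs{n}$). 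The remaining and genuinely dangerous threat is an interval \Js{\eta0} split off from a \emph{neighbouring} piece of the same component, whose left endpoint $\xs{\eta}$ could a priori land in \opint{a}{a+\xs{n}}; this is exactly the ``cover-up'' flagged before \refer{fig}{ratiosalphbet}. I expect this to be the main obstacle, and the non-increasing hypothesis is what must be exploited to defeat it: one needs that the \emph{only} stage-$n$ left endpoint in \ropint{a}{a+\xs{n}} is $a$ itself, i.e.\ that subset sums cannot cluster within $\xs{n}$ to the right of a component's left endpoint. The cleanest route I see is to locate the earlier stage $m\leq n-1$ at which the separation creating $a$'s left gap occurred and to use \refer{eqn}{partition} at that stage to present \Cuts{n} near $a$ as a translate of the tail-sequence construction, importing the $0$-case verbatim---with the delicate point being to verify that no interval from a \emph{different} branch of that partition intrudes, which is precisely where the ordering of the terms is indispensable.
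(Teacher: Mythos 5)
Your overall architecture---disjointness, reduction to left endpoints via the reflection $x\mapsto\Xs{0}-x$, the component containing $0$, then general components---is the same as the paper's, and the first three pieces are carried out correctly. (Your remark that the reflection sends each \Js{\xi} to the interval indexed by the complementary word, and hence is a symmetry of every finite stage \Cuts{n} rather than only of \Cuts{\infty}, is a genuine sharpening of the paper, which cites only the symmetry of the limit set.) The problem is the fourth piece, which is where all the content of the lemma lives: you correctly isolate the dangerous configuration---a word $\eta$ of length $n-1$ with $\xs{\eta}$ strictly between $a$ and $a+\xs{n}$, whose subinterval \Js{\eta 0} would cover the would-be gap \opint{a+\Xs{n}}{a+\xs{n}}---and then you stop, offering only a plan for ruling it out. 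That is not a finishable detail but the crux. For comparison, the paper disposes of this step with a single sentence (``translate the whole picture'' by \xs{\xi} and use the larger gap to the left of $a$); that handles the words ending in $1$ and the intervals lying below $a$, i.e.\ the part you also handle, but is silent on exactly the configuration you flagged.

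In fact the obstacle you identified is fatal: the lemma as stated is false, so no argument can close your gap. Take $\xs{1}=1.05$, $\xs{2}=1$, $\xs{3}=\frac{1}{10}$, and $\xs{k}=5\cdot 10^{-(k-2)}$ for $k\geq4$; this sequence is positive, non-increasing and summable, with $\Xs{3}=\frac{1}{18}$ and $\Xs{2}=\frac{1}{10}+\frac{1}{18}=\frac{7}{45}$. The intervals $\Js{01}=\clint{1}{1+\Xs{2}}$ and $\Js{10}=\clint{1.05}{1.05+\Xs{2}}$ overlap, so $\clint{a}{b}=\clint{1}{1.05+\Xs{2}}$ is a component of \Cuts{2}, and $\xs{3}=\frac{1}{10}>\frac{1}{18}=\Xs{3}$, so the term exceeds the tail at stage $n=3$. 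Yet \clint{1}{1+\Xs{3}} is not a component of \Cuts{3}: the four stage-$3$ intervals with left endpoints $1$, $1.05$, $1.1$, $1.15$ each have length $\Xs{3}=\frac{1}{18}>0.05$, so consecutive ones overlap and the component of \Cuts{3} containing $1$ is still all of \clint{a}{b}. The culprit is precisely your ``cover-up'': $\xs{1}=1.05$ lies in \opint{a}{a+\xs{3}}. So the key property you said must be established---that $a$ is the only stage-$(n-1)$ subset sum in \ropint{a}{a+\xs{n}}---simply does not follow from the hypotheses: at $a=0$ it holds because the smallest positive subset sum of a non-increasing sequence is $\xs{n}$, but at an interior left endpoint the relevant quantities are \emph{differences} $\xs{\eta}-\xs{\xi}$ of subset sums, which monotonicity does not bound away from zero. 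A correct statement needs an extra hypothesis (for instance that \clint{a}{b} is a single interval \Js{\xi}, as happens when the term has exceeded the tail at every earlier stage), or one must retreat to the two extreme components at $0$ and $\Xs{0}$, which your argument and the paper's do establish and which still yield infinitely many components when the term exceeds the tail infinitely often.
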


\begin{prop}\label{prop:gaps}
	Suppose \seqx{} is a positive, non-increasing summable sequence.

	Then \Sigsum{}  has
	\begin{enumerate}
		\item\label{prop:gaps1} infinitely many components if the term exceeds the tail infinitely often;
		\item\label{prop:gaps2} at least $2^{N}$ components if the term exceeds the tail $N$ times.
	\end{enumerate}
\end{prop}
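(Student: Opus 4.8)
The plan is to derive both statements from \refer{lem}{gaps} by tracking the number of connected components of \Cuts{n} as $n$ grows, and then transferring that count to the limit set $\Cutinf=\Sigsum$. Write $c(n)$ for the number of components of \Cuts{n}. The first fact I would record is that $c(n)$ is non-decreasing: since $\Cuts{n+1}\subseteq\Cuts{n}$, every component of \Cuts{n+1} lies in a unique component of \Cuts{n}, while each component of \Cuts{n} contains some interval \Js{\finword}, whose endpoints are a finite and an infinite subsum and hence belong to $\Sigsum\subseteq\Cuts{n+1}$. Thus every component of \Cuts{n} contains at least one component of \Cuts{n+1}, which forces $c(n+1)\geq c(n)$.

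The central step is a doubling estimate: \emph{if $\xs{n}>\Xs{n}$ then $c(n)\geq 2\,c(n-1)$.} Let \clint{a}{b} be any component of \Cuts{n-1}. By \refer{lem}{gaps}, the intervals \clint{a}{a+\Xs{n}} and \clint{b-\Xs{n}}{b} are two \emph{disjoint} components of \Cuts{n}, both sitting inside \clint{a}{b}. Because the components of \Cuts{n-1} are pairwise disjoint and each component of \Cuts{n} is contained in exactly one of them, the two components produced inside distinct \clint{a}{b}'s are themselves distinct; summing over the $c(n-1)$ components of \Cuts{n-1} gives at least $2\,c(n-1)$ components of \Cuts{n}.

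Now I would run the induction. Suppose the term exceeds the tail at the stages $n_1<n_2<\dots<n_N$. Starting from $c(0)=1$ (as $\Cuts{0}=\clint{0}{\Xs{0}}$ is a single interval) and combining monotonicity with the doubling estimate applied at each $n_j$, one gets $c(n_j)\geq 2^{j}$ by induction on $j$, so in particular $c(n_N)\geq 2^{N}$. It remains to pass to the limit. Fix $n=n_N$ and list the components $I_1,\dots,I_m$ of \Cuts{n} ($m=c(n)\geq 2^{N}$) from left to right, separated by nonempty open gaps $G_1,\dots,G_{m-1}$. Each $G_k$ is disjoint from \Cuts{n}, hence from $\Sigsum\subseteq\Cuts{n}$, so the gaps persist in the limit; and each $I_j$ meets \Sigsum, again because its endpoints are subsums. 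Therefore the nonempty sets $\Sigsum\cap I_j$ occupy $m$ distinct components of \Sigsum, so \Sigsum{} has at least $2^{N}$ components, proving part (2) of \refer{prop}{gaps}. Part (1) is then immediate: if the term exceeds the tail infinitely often it does so at least $N$ times for every $N$, so \Sigsum{} has at least $2^{N}$ components for every $N$ and hence infinitely many.

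I expect the doubling estimate to be essentially automatic once \refer{lem}{gaps} is available; the only real care is needed in the limit, where one must be sure that the gaps opened at a finite stage are permanent (which is just the nesting $\Cutinf\subseteq\Cuts{n}$) and that each finite-stage component genuinely carries a point of \Sigsum{} (which is the endpoint description), so that none of the $2^{N}$ finite-stage components can collapse together in \Cutinf.
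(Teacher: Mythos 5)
Your argument is correct and is exactly the route the paper intends: \refer{prop}{gaps} is presented as an immediate consequence of \refer{lem}{gaps}, via the observation that each stage at which the term exceeds the tail at least doubles the number of components of \Cuts{n}, and that this count passes to $\Cutinf=\Sigsum$. Your write-up simply makes explicit the bookkeeping (monotonicity of the component count, the doubling step, and the persistence of gaps and of points of \Sigsum{} in each finite-stage component) that the paper leaves implicit.
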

Note that \eqref{prop:gaps2} generalizes the lower bound given in \refer{prop}{eventtailbds}(\ref{prop:eventtailbds3}).

In particular,
\begin{corollary}\label{cor:gaps}
	The subsum set of a positive, non-increasing
	summable sequence \seqx{} is a finite union of intervals
	if and only if the tail eventually exceeds the term.
\end{corollary}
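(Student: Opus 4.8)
The plan is to read this corollary off as the logical pairing of the two results already in hand, once one notices that their hypotheses are exact negations of one another. I interpret ``the tail eventually exceeds the term'' as the condition appearing in \refer{prop}{eventtailbds}, namely that there is an $N$ with $\xs{k}\leq\Xs{k}$ for all $k>N$ (what the text calls ``the tail bounds the term eventually''). Its negation is precisely ``the term exceeds the tail infinitely often'': for every $N$ there is some $k>N$ with $\xs{k}>\Xs{k}$. Thus the biconditional to be proved asserts that \Sigsum{} is a finite union of intervals exactly when the tail bounds the term eventually, and the two sides of the dichotomy $\xs{k}\leq\Xs{k}$ versus $\xs{k}>\Xs{k}$ partition all the possibilities cleanly.

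For the implication starting from the tail eventually bounding the term, there is nothing new to prove: this is exactly \refer{prop}{eventtailbds}(\ref{prop:eventtailbds1}), which already delivers \Sigsum{} as a finite union of closed intervals.

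For the converse I would argue by contraposition. If the tail does \emph{not} eventually bound the term, then by the negation noted above the term exceeds the tail infinitely often, and \refer{prop}{gaps}(\ref{prop:gaps1}) then gives that \Sigsum{} has infinitely many components. Since a finite union of intervals can have only finitely many components, \Sigsum{} cannot be such a union, which is precisely what contraposition demands. This closes the equivalence.

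I do not expect any genuine obstacle, the substantive work having been carried out in the two propositions; the only care needed is in correctly bookkeeping the passage between an ``eventually'' statement and its ``infinitely often'' negation. One clarifying remark is perhaps worth making, though it is not strictly required above: for \Sigsum{} the phrases ``finite union of intervals'' and ``finitely many components'' coincide, since any one-point component would be clopen in \Sigsum{} and hence an isolated point, which perfectness (\refer{thm}{subsumset}) forbids, forcing every component to be a nondegenerate interval. Only the trivial half of this identification—that a finite union of intervals has finitely many components—is actually invoked in the argument.
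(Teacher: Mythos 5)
Your proof is correct and matches the paper's (implicit) argument exactly: the corollary is stated as an immediate consequence of \refer{prop}{eventtailbds}(\ref{prop:eventtailbds1}) for one direction and \refer{prop}{gaps}(\ref{prop:gaps1}) for the contrapositive of the other, with ``the tail eventually exceeds the term'' read as the negation of ``the term exceeds the tail infinitely often,'' just as you do. Your closing remark on why components of \Sigsum{} must be nondegenerate intervals is a sensible clarification but, as you note, not needed for the equivalence.
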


\refer{prop}{gaps}\eqref{prop:gaps1} and \refer{cor}{gaps} strongly suggest that a subsum set is either a finite union of closed
intervals or a Cantor set.  However, to show that a subsum set is a Cantor set, we need to show not only that
it has infinitely many components, but also that it has empty interior, or equivalently, that every component
is a single point.  The following observation, which follows from \refer{lem}{gaps}, suggests that this might
be true:
\begin{remark}\label{rmk:gaps}
	Suppose \seqx{} is a positive, non-increasing summable sequence.
	If the term exceeds the tail infinitely often, then each endpoint of every component of each \Cuts{n}
	constitutes a one-point component of $\Sigsum=\Cutinf$.
\end{remark}
Kakeya \cite{Kakeya} conjectured that the subsum set is a Cantor set if and only if the term exceeds the tail
infinitely often. Initially, I had the same intuition.
However, it turns out that there exist subsum sets with infinitely many components but nonempty interior.
We shall study some examples in the next section.

\section{Cantorvals}\label{sec:Cantorvals}
The following example was analyzed by Guthrie and Nymann
in \cite{Guthrie-Nymann} in the process of characterizing the range of an arbitrary finite measure.  
Consider the positive decreasing summable sequence 
$
	\frac{3}{4}, \frac{2}{4}, \frac{3}{16}, \frac{2}{16},\dots
$
that is,
\begin{align*}
	\xs{2k-1}&=\frac{3}{4^{k}}\\
	\xs{2k}&=\frac{2}{4^{k}}.
\end{align*}
The tails of this sequence are
\begin{align*}\label{eqn:tailsex}
	\Xs{2k}&=\frac{5}{3\cdot4^{k}}, k=0,1,\dots\\
	\Xs{2k-1}&=\frac{11}{3\cdot4^{k}}, k=1,\dots.
\end{align*}
Since
$3<\frac{11}{3}$ and $2>\frac{5}{3}$,
we see that every even-numbered term exceeds the corresponding tail, so \Sigsum{} has infinitely many components, by \refer{prop}{gaps}.

Guthrie and Nymann show that the subsum set contains the interval \clint{\frac{3}{4}}{1},
 but their argument (and example) can be seen as a special case of a number-theoretic argument shown me by
 Rick Kenyon, in the context of an example he sent me before I ran across \cite{Guthrie-Nymann}, namely
$
	\frac{6}{4},\frac{1}{4},\frac{6}{16},\frac{1}{16},\dots
$
or
\begin{align*}
	\xs{2k-1}&=\frac{6}{4^{k}}\\
	\xs{2k}&=\frac{1}{4^{k}}.
\end{align*}
We note that this order, while it makes transparent the generating formulas for the sequence, is not 
monotone:  for  example, $\xs{2}=\frac{1}{4}=\frac{4}{16}<\frac{6}{16}=\xs{3}$, 
and $\xs{4}=\recip{16}=\frac{2}{2}>\xs{5}=\frac{6}{64}=\frac{3}{32}$.  For the record, the non-increasing
order is
$
	\frac{6}{4},\frac{6}{16},\frac{1}{4},\frac{6}{64},\frac{1}{16},\frac{6}{256},\frac{1}{64},\dots
$.
The reader can verify that the term exceeds the tail infinitely often.  
The following argument, suggested by Kenyon \cite[\S2]{Kenyon}, 
gives a way to generate many examples with
nonempty interior and, presumably, infinitely many components (including the Guthrie-Nymann one).%
\footnote{\cite[p. 515]{RJones} gives another example, which he attributes to Dan Velleman, very much
in the same spirit.}

The key observation (in the case of Kenyon's example above) is that every congruence class mod $4$ can be 
obtained as a sum of the ``digits'' $6$ and $1$, since $6\equiv2\mod{4}$ and $6+1=7\equiv3\mod{4}$.
Thus the set of sums of Kenyon's sequence is the set of all reals which can be expressed as ``generalized
base 4 expansions'' using the ``digits'' $0,1,6$ and $7$:
\begin{equation*}
	\Sigsum=\setbld{\sum_{i=1}^{\infty}\frac{\as{i}}{4^{i}}}{\as{i}\in\single{0,1,6,7}}.
\end{equation*}

\begin{prop}[R. Kenyon] 
	Suppose we are given \inNat{n} and $n$ integers $\ds{0},\ds{1},\dots,\ds{n-1}$ such that 
	\begin{equation*}
		\ds{i}\equiv j\mod{n}.
	\end{equation*}
	Then the set of ``generalized base $n$ expansions'' using these ``digits'' 
	\begin{equation*}
	\setS=\setbld{\sum_{i=1}^{\infty}\frac{\as{i}}{n^{i}}}{\as{i}\in\single{\ds{0},\dots,\ds{n-1}}}
	\end{equation*}
	has nonempty interior. 
\end{prop}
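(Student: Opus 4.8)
The plan is to exhibit $\setS$ as a compact, self-similar set whose \emph{integer} translates cover the whole line, and then to extract an interior point by a Baire category argument. Throughout I read the hypothesis as $\ds{i}\equiv i\pmod n$, i.e.\ the digits $\ds{0},\dots,\ds{n-1}$ form a complete residue system modulo $n$ (this is surely what the displayed congruence intends). Writing $m=\min_i\ds{i}$ and $M=\max_i\ds{i}$, I would first record the routine facts: $\setS$ is the continuous image of the compact product space $\single{\ds{0},\dots,\ds{n-1}}^{\mathbb{N}}$ under $(\as{i})\mapsto\sum_i\as{i}n^{-i}$, hence compact; it satisfies the self-similarity $\setS=\tfrac1n\bigl(\single{\ds{0},\dots,\ds{n-1}}+\setS\bigr)$; and $\setS\subseteq\clint{\tfrac{m}{n-1}}{\tfrac{M}{n-1}}$, an interval of length $\tfrac{M-m}{n-1}$.

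The heart of the argument is a number-theoretic observation. For each $N$ consider the finite sums $S_N=\setbld{\sum_{i=1}^{N}\as{i}n^{-i}}{\as{i}\in\single{\ds{0},\dots,\ds{n-1}}}$ and the associated integers $n^N S_N$. I claim the map sending $(\as{1},\dots,\as{N})$ to $\as{1}n^{N-1}+\cdots+\as{N}$ is a bijection onto $\mathbb{Z}/n^N\mathbb{Z}$. This is proved by induction on $N$: the base case $N=1$ is precisely the complete-residue-system hypothesis, and in the inductive step one fixes the last digit $\as{N+1}$ as the unique digit congruent to the target modulo $n$, then divides by $n$ and applies the inductive hypothesis to the remaining positions. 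Consequently $n^N S_N$ is a complete residue system modulo $n^N$, so that $n^N S_N+n^N\mathbb{Z}=\mathbb{Z}$; dividing by $n^N$ gives the clean identity $S_N+\mathbb{Z}=\tfrac{1}{n^N}\mathbb{Z}$.

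Now I would let $N\to\infty$. Each point of $S_N$ lies within $C n^{-N}$ of $\setS$ (append any admissible tail of digits), where $C$ depends only on the digit set; so the identity $S_N+\mathbb{Z}=\tfrac{1}{n^N}\mathbb{Z}$ forces every point of the lattice $\tfrac{1}{n^N}\mathbb{Z}$ to lie within $C n^{-N}$ of $\setS+\mathbb{Z}$. As $N\to\infty$ these lattices become arbitrarily fine while the error tends to zero, so $\setS+\mathbb{Z}$ is dense in $\mathbb{R}$. Since $\setS$ is compact, $\setS+\mathbb{Z}$ is closed, and a closed dense set is everything: the integer translates of $\setS$ cover the line, $\mathbb{R}=\bigcup_{k\in\mathbb{Z}}\bigl(\setS+k\bigr)$.

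The conclusion is then immediate from the Baire Category Theorem. The line is a complete metric space expressed as a \emph{countable} union of the closed sets $\setS+k$; hence at least one of them fails to be nowhere dense, and being closed it must have nonempty interior. Since translation is a homeomorphism, a translate of $\setS$ has nonempty interior exactly when $\setS$ does, which proves the proposition. I expect the genuine obstacle to be the covering step---establishing that a complete residue system of digits makes the integer translates fill the line (equivalently, the number-theoretic bijection above); once the line is covered by countably many closed copies of $\setS$, the Baire argument is essentially free. One could instead run a measure-theoretic version, since the bounded covering multiplicity of the translates forces $\setS$ to have positive Lebesgue measure; but positive measure alone would not yield \emph{interior} (witness fat Cantor sets), which is why I route the final step through Baire category rather than through measure.
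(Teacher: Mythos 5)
Your proof follows the paper's argument essentially step for step: the same number-theoretic lemma that the truncated sums form a complete residue system modulo $n^{N}$ (you prove surjectivity directly by induction on the last digit, where the paper proves injectivity and then invokes the pigeonhole principle), the same covering of $\Reals$ by the integer translates of \setS, and the same appeal to the Baire Category Theorem. If anything you are more careful than the paper at the one delicate point---passing from the fact that $S_{N}+\Integers$ fills the lattice $n^{-N}\Integers$ to the exact covering $\setS+\Integers=\Reals$, which you justify via compactness and closedness of $\setS+\Integers$ while the paper dismisses it as a ``reinterpretation.''
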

\begin{proof}
	The first step is to confirm the somewhat optimistic intuition that, since the digits include representatives
	of all the congruence classes $\mod{n}$, the finite sums of the form
	\begin{equation*}
		\sum_{i=1}^{k}\frac{\as{i}}{n^{i}},\quad\as{i}\in\single{\ds{0},\dots,\ds{n-1}}
	\end{equation*}
	should, by analogy with the standard case $\ds{j}=j$, 
	have fractional parts that include all rational numbers of the form $\frac{a}{n^{k}}$.
	The ``obvious'' reasoning we might expect does not apply:  for example, 
	$\frac{1}{4}+\frac{2}{4^{2}}=\frac{6}{16}$ while $\frac{1}{4}+\frac{6}{4^{2}}=\frac{10}{16}$;
	the difference is not an integer 
	even though $6=2\mod{4}$.
	However, it \emph{is} true that \emph{different} expressions of this form have \emph{different} 
	fractional parts.  To see this, suppose we have two such sums with the same fractional part:
	\begin{equation*}
		\frac{\as{1}}{n}+\frac{\as{2}}{n^{2}}+\dots+\frac{\as{k}}{n^{k}}
			=\frac{\bs{1}}{n}+\frac{\bs{2}}{n^{2}}+\dots+\frac{\bs{k}}{n^{k}}+N
	\end{equation*}
	(where each \as{i} and \bs{i} is one of our ``digits'' $\ds{0},\dots,\ds{n-1}$, and \inNat{N}).
	We can rewrite this as
	\begin{equation*}
		\frac{\as{1}-\bs{1}}{n^{1}}+\frac{\as{2}-\bs{2}}{n^{2}}+\dots\frac{\as{k}-\bs{k}}{n^{k}}=N
	\end{equation*}
	and multiply both sides by $n^{k}$:
	\begin{equation*}
		n^{k-1}(\as{1}-\bs{1})+n^{k-2}(\as{2}-\bs{2})+\dots+n(\as{k-1}-\bs{k-1})+(\as{k}-\bs{k})=n^{k}N.
	\end{equation*}
	Taking the congruence class of both sides $\mod{n}$, we get
	\begin{equation*}
		\as{k}-\bs{k}\equiv0\mod{n}.
	\end{equation*}
	But since the possible digits belong to different congruence classes $\mod{n}$, we must have
	\begin{equation*}
		\as{k}=\bs{k}.
	\end{equation*} 
	Thus by induction on $k$, $\as{i}=\bs{i}$ for $i=1.2,\dots,k$.
	
	Now, for a given (fixed) $k$, there are $n^{k}$ sums of the form 
	\begin{equation*}
		\sum_{i=1}^{k}\frac{\as{i}}{n^{i}}
	\end{equation*}
	as well as $n^{k}$ fractions of the form $\frac{a}{n^{k}}$ with $0\leq a<n^{k}$.
	Hence by the pigeonhole principle, congruence $\mod{n}$ generates a bijection
	between the two sets, confirming our intuition.
	
	The second step is then to reinterpret this statement to say that the integer translates of \setS{}
	cover the whole real line
	\begin{equation*}
		\bigcup_{k\in\Integers}\left(k+\setS\right)=\Reals.
	\end{equation*} 
	
	Finally, we invoke the \emph{Baire Category Theorem}, which in our context says that if a countable 
	union of sets equals \Reals{} then at least one of them has nonempty interior \cite{Baire}.%
	\footnote{This was Baire's doctoral dissertation;  
	see Dunham's highly readable account in \cite[pp. 184-191]{Dunham}.
	A more general version of this (involving complete metric spaces), is proved in many basic analysis 
	texts;  for example, 
	see  \cite[Thm. 4.31, pp. 243-5]{Pugh} or \cite[Prob. 16, p.40]{Rudin}, 
	} 
	From this we conclude that for at least one integer $k$, $\left(k+\setS\right)$ 
	has non-empty interior---but since it is a translate of \setS{}, the same is true of \setS{}.
\end{proof} 

Having established the existence of subsum sets with infinitely many components but non-empty interior,
we should try to understand better the structure of these sets.  

Suppose a subsum set \Sigsum{} has infinitely many components but non-empty interior.  
For each $n$, we can write \Sigsum{} as the union of $2^{n}$ translates of the set \Sigsumsn{}.
Invoking the Baire Category Theorem again (this time in its weaker form, involving a finite union)
we conclude that one, and hence all, of these translates has non-empty interior.  In particular, 
each interval \Js{\xis{1}\cdots\xis{n}} contains a subinterval of \Sigsum{}.  This means that every point
of \Sigsum{} is within distance \Xs{n} of some subinterval of \Sigsum{}.  Since $\Xs{n}\to0$, the 
subintervals (in particular the non-trivial components) of \Sigsum{} are dense.  At the same time, 
\refer{rmk}{gaps} tells us that the trivial (\ie{} one-point) components of \Sigsum{} are also dense,
in the sense that every endpoint of a non-trivial component is an accumulation point of trivial components.
In addition to Guthrie and Nymann \cite{Guthrie-Nymann}, such sets were studied by Mendes and Oliveira
\cite{Mendes-Oliveira}, in connection with the structure of arithmetic sums of Cantor sets 
(motivated by the study of bifurcation phenomena in dynamical systems).  
They dubbed them \emph{Cantorvals}.  In their context, three varieties of Cantorvals can arise, but 
because of the symmetry of subsum sets, the only kind that arises in our context is what they call an
\emph{$M$-Cantorval}.  I prefer the more descriptive term \emph{symmetric Cantorval}.
Formally:
\begin{definition}\label{dfn:Cantorval}
	A \deffont{symmetric Cantorval} is a nonempty compact subset \setS{} of the real line such that
	\begin{enumerate}
		\item \setS{} is the closure of its interior (\ie{} the nontrivial components are dense)
		\item Both endpoints of any nontrivial component of \setS{} are accumulation points of trivial 
		(\ie{} one-point) components of \setS.
	\end{enumerate}
\end{definition}

The remarks above establish a full topological classification of subsum sets for summable positive sequences, 
proven by Guthrie and Nymann
(with different terminology) in \cite{Guthrie-Nymann}:
\begin{theorem}[Guthrie-Nymann]\label{thm:classify}
	The subsum set of a positive summable sequence is one of the following:
	\begin{enumerate}
		\item a finite union of (disjoint) closed intervals;
		\item a Cantor set;
		\item a symmetric Cantorval.
	\end{enumerate}
\end{theorem}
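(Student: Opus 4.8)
The plan is to assemble the results of the preceding sections into a clean trichotomy, organized by a single dichotomy---whether the term exceeds the tail only finitely often or infinitely often---with a secondary split, in the infinite case, according to whether \Sigsum{} has empty interior. Since \refer{thm}{subsumset}, \refer{cor}{gaps}, \refer{prop}{gaps}, \refer{rmk}{gaps}, and the Baire-category argument have already done the substantive work, the proof is largely bookkeeping: verifying that the three branches land in exactly the three advertised alternatives and that those alternatives are mutually exclusive.

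First I would dispose of the case where the term exceeds the tail only finitely often, \ie{} the tail eventually bounds the term. Here \refer{cor}{gaps} (resting on \refer{prop}{eventtailbds}) says at once that \Sigsum{} is a finite union of disjoint closed intervals, which is alternative~(1); and this is the \emph{only} possibility in this case, since alternatives (2) and (3) each have infinitely many components.

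Next suppose the term exceeds the tail infinitely often. Then \refer{prop}{gaps}\eqref{prop:gaps1} gives infinitely many components, so alternative~(1) is excluded, and I would split on the interior of \Sigsum{}. If the interior is empty, then no component of \Sigsum{} can be a nondegenerate interval (its interior points would lie in the closed set \Sigsum{}), so every component is a single point and \Sigsum{} is totally disconnected; being also nonempty, compact, and perfect (\refer{thm}{subsumset}), it is a Cantor set by the standard topological characterization of the Cantor set as the unique nonempty perfect, compact, totally disconnected metric space. This is alternative~(2).

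Finally, if the term exceeds the tail infinitely often and \Sigsum{} has nonempty interior, I would simply point out that the two defining clauses of \refer{dfn}{Cantorval} have already been checked in the paragraph preceding that definition: the Baire-category argument applied to the decomposition of \Sigsum{} into $2^{n}$ translates of \Sigsumsn{} shows that each \Js{\finword} meets \Sigsum{} in a set with interior, whence (as $\Xs{n}\to0$) the nontrivial components are dense and \Sigsum{} is the closure of its interior, while \refer{rmk}{gaps}---available precisely because the term exceeds the tail infinitely often---shows that every endpoint of a nontrivial component is an accumulation point of one-point components. Hence \Sigsum{} is a symmetric Cantorval, alternative~(3), the reflection symmetry of \refer{thm}{subsumset} justifying the adjective. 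The three cases are exhaustive and pairwise disjoint, completing the classification. I expect the genuinely delicate step to be the Cantor-set subcase: one must upgrade ``empty interior'' to ``totally disconnected'' and then invoke the abstract characterization of the Cantor set, rather than leaning on the explicit base-three homeomorphism used for powers of \recip{3}, which has no analogue for a general sequence.
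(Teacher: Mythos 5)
Your proposal is correct and follows essentially the same route as the paper, which proves the theorem simply by assembling the preceding remarks: \refer{cor}{gaps} for the finite-union case, and then the dichotomy on the interior of \Sigsum{} (empty interior giving a Cantor set via perfectness and total disconnectedness, nonempty interior giving a symmetric Cantorval via the Baire-category argument and \refer{rmk}{gaps}). The only difference is one of explicitness: you spell out the bookkeeping that the paper compresses into the sentence ``our remarks above establish the full topological classification.''
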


Each of the first two categories in \refer{thm}{classify} provides a list of possible topological types: in the first
case, the number of components determines the topological type, while in the second, all Cantor sets are
homeomorphic, by a well-known theorem (see for example \cite[pp. 103-4]{Pugh}).  It turns out that all
(symmetric) Cantorvals are also homeomorphic.  This was proved in \cite{Guthrie-Nymann} and stated 
without explicit proof in \cite{Mendes-Oliveira}.

\begin{prop}\label{prop:Cantorvalhomeo}
	Any two symmetric Cantorvals are homeomorphic.
\end{prop}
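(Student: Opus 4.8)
The plan is to recognize a symmetric Cantorval as a Cantor set into which countably many intervals have been inserted at a dense set of points, and then to match up these data across two such sets by a single order-preserving back-and-forth construction. First I would collapse structure. Given a symmetric Cantorval \setS{} with convex hull $[a,b]$, let $q\colon\setS\to C$ be the map sending each connected component to a point, with $C$ carrying the quotient (equivalently, the left-to-right order) topology. I claim $C$ is a Cantor set: it is a continuous image of a compact metric space, hence compact metrizable; it is totally disconnected, since we have collapsed every component; and it is \emph{perfect}, because \refer{dfn}{Cantorval}(1) makes every nontrivial component a limit of other components while \refer{dfn}{Cantorval}(2) makes every trivial component a limit of (trivial) components, so no point of $C$ is isolated. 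By the topological characterization of the Cantor set (\cite[pp. 103-4]{Pugh}), $C\cong\Cantor$. Let $D=q(\{\text{nontrivial components}\})\subseteq C$. Then $D$ is countable (the nontrivial components are pairwise disjoint nondegenerate subintervals of $[a,b]$) and dense (again \refer{dfn}{Cantorval}(1)); being countable it has empty interior, so $C\setminus D$ is dense as well.

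Conversely, \setS{} is reconstructed from the pair $(C,D)$ by re-expanding each $d\in D$ into a closed interval; the actual interval \emph{lengths} are topologically irrelevant, since over two matched points we may always use an increasing affine map. So for two symmetric Cantorvals $\setS_{1},\setS_{2}$ with data $(C_{1},D_{1})$ and $(C_{2},D_{2})$, the proposition reduces to two statements: (i) there is an increasing homeomorphism $h\colon C_{1}\to C_{2}$ with $h(D_{1})=D_{2}$; and (ii) any such $h$ lifts to an increasing homeomorphism $\setS_{1}\to\setS_{2}$.

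For (i) I would run a Cantor-style back-and-forth on the combinatorial skeleton, arranged so that the resulting bijection is order preserving and respects the gap/interval dichotomy. Each $C_{i}$ carries its countable collection of complementary gaps, which (since a Cantor set is nowhere dense with no isolated points) form a dense linear order without endpoints, and the marked set $D_{i}$ singles out certain slots between consecutive gaps. Enumerating the gaps and marked points of both sides and inserting them one at a time, I build an order isomorphism between countable dense subsets of $C_{1}$ and $C_{2}$ that sends gaps to gaps in order and $D_{1}$-points to $D_{2}$-points; density of the gaps, of $D_{i}$, and of $C_{i}\setminus D_{i}$ is exactly what makes each insertion possible. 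Its monotone closure is the desired increasing homeomorphism $h$. This is essentially the standard strengthening of Cantor's theorem on $\mathbb{Q}$ to pairs (Cantor set, countable dense subset).

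For (ii), define $H\colon\setS_{1}\to\setS_{2}$ to agree with $h$ on the point-components and, on the interval over each $d\in D_{1}$, to be the increasing affine bijection onto the interval over $h(d)$; monotonicity and bijectivity are then immediate. The one genuine analytic point---and the step I expect to be the main obstacle---is continuity of $H$ (and, symmetrically, of $H^{-1}$) at the points of $C_{1}\setminus D_{1}$. Here I would argue that $H$ has no jumps: because $h$ matches the complementary gaps of $\setS_{1}$ with those of $\setS_{2}$, the monotone bijection $H$ never maps across a gap of $\setS_{2}$, and because only finitely many components of $\setS_{2}$ can exceed any fixed length (they are disjoint subintervals of a bounded interval), the images of small components stay small. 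A monotone bijection between compact subsets of the line with no jumps and shrinking component diameters is a homeomorphism, so $H$ is the required homeomorphism $\setS_{1}\to\setS_{2}$. The real work lies in the simultaneous bookkeeping of the back-and-forth (order, gaps-to-gaps, and $D$-to-$D$ at once) together with this continuity estimate.
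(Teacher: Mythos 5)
Your argument is correct, but it takes a genuinely different route from the paper's. Both proofs come down to producing an order-preserving bijection between the nontrivial components of the two Cantorvals and then extending it to the whole sets; the paper obtains this bijection canonically, by matching the longest (leftmost, in case of ties) component of \Cantor{} with that of \Cantorp{} via the unique increasing affine map, observing that what remains on either side of the chosen component is again a symmetric Cantorval, and recursing --- no quotient space, no classification theorem for Cantor sets, no abstract back-and-forth. You instead collapse components to obtain a Cantor set with a countable dense marked subset and run a color-preserving back-and-forth on the linear order consisting of complementary gaps and marked points; this is heavier machinery, but it makes explicit the order-theoretic content (two interleaved dense classes in a countable dense linear order without endpoints) that the paper's recursion exploits only implicitly. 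Two points you should make explicit if you write this up. First, for the two colored orders to be isomorphic you need that no marked point is an endpoint of a gap, i.e., that no nontrivial component of a symmetric Cantorval abuts a complementary gap; this is exactly where condition (2) of \refer{dfn}{Cantorval} enters (an endpoint flush against a gap could not be an accumulation point of trivial components), and without it the back-and-forth could stall, since density of the classes alone does not rule out such adjacencies. Second, the continuity of $H$ that you flag as the main obstacle is in fact automatic: an order-preserving bijection between two compact subsets of $\mathbb{R}$ is always a homeomorphism (on a compact subset of the line the order topology and the subspace topology coincide), so once $H$ is an increasing bijection of $\setS_{1}$ onto $\setS_{2}$ you are done --- this is also the unstated content of the paper's final sentence about extending from the dense interior.
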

\begin{proof}
	Given two Cantorvals \Cantor{} and \Cantorp, first identify the longest component of each;  if there is
	some ambiguity (because several components have the same maximal length), then pick the leftmost
	one.  There is a unique affine, order-preserving homeomorphism between them.
	
	Note that by definition there are other components of \Cantor{} \resp{\Cantorp} on either side of the
	chosen one.  In particular, its complement is contained in two disjoint intervals, one to the right
	and one to the left, and the part of each Cantorval in each of these intervals is again a Cantorval.
	Thus, we can apply the same algorithm to pair the longest nontrivial component to the left \resp{right}
	of the chosen one in \Cantor{} with the corresponding one in \Cantorp. Continuing in this way, we
	get an order-preserving correspondence between the non-trivial components of \Cantor{}
	and those of \Cantorp{}, and an order-preserving homeomorphism between corresponding components.
	But this means we have an order-preserving continuous mapping from the (dense) interior of 
	\Cantor{} onto the interior of \Cantorp.  This uniquely extends to a homeomorphism from all of \Cantor{}
	onto all of \Cantorp.
\end{proof}

Guthrie and Nymann point out that a model symmetric Cantorval can be constructed by following the standard
construction of the middle-third Cantor set (removing the middle third of each component at a given stage)
but then going back and ``filling in'' the gaps at \emph{every other} stage.

 \section{Bi-Geometric Sequences}\label{sec:bigeom} 
 
We saw in \refer{sec}{termstails} that the subsum set of a geometric sequence is either an interval or a 
Cantor set, because the relation between the term and the tail is always the same.  We can construct
examples which exhibit any particular pattern of alternation between the two possible relations by looking at the sequence of sets \Cuts{n} in a 
different way, in terms of ratios.

To be precise, given a sequence \seqx{} of terms, let us look at the associated sequence of tails,
\seqX, and for each index $i$, consider the proportion of \Xs{i} taken up by
\xs{i+1}:
\begin{equation*}
	\rhos{i}\eqdef\frac{\xs{i+1}}{\Xs{i}}
\end{equation*}
	or equivalently
\begin{equation}\label{eqn:xratio}
	\xs{i+1}=\rhos{i}\cdot\Xs{i}
\end{equation}
Then, since 
\begin{equation*}
	\Xs{i}=\xs{i+1}+\Xs{i+1},
\end{equation*}
we have
\begin{equation}\label{eqn:Xratio}
	\Xs{i+1}=(1-\rhos{i})\cdot\Xs{i}.
\end{equation}
Conversely, the sequence of ratios \single{\rhos{i}} together with the total sum \Xs{0} determines
the sequence \seqx{} recursively, via the initial condition 
\begin{align*}
	\xs{1}&=\rhos{0}\Xs{0}\\
	\intertext{and the relation}
	\xs{i+1}&=\rhos{i}\left(\rhos{i-1}^{-1}-1\right)\xs{i}.
\end{align*}
Equivalently, \xs{i} can be given by an explicit formula:
\begin{equation}\label{eqn:xratios}
	\xs{i}=\rhos{i}\prod_{j=0,\dots,i-1}(1-\rhos{j})\Xs{0}.
\end{equation}
The initial (total) sum \Xs{0} is simply a scaling factor, so to determine what kind of set occurs we
can \emph{assume that the total sum is }$\Xs{0}=1$.	

Now, at each stage, the term and tail are determined from the previous tail by \eqref{eqn:xratio}
and \eqref{eqn:Xratio}, from which it is easy to see that
\begin{itemize}
	\item the sequence \seqx{} is non-increasing if and only if for every $i$
	\begin{equation}\label{eqn:nondecr}
		\rhos{i}\leq\frac{\rhos{i-1}}{1-\rhos{i-1}};
	\end{equation}
	\item the \nth{} term exceeds the \nth{} tail ($\xs{n}>\Xs{n}$) if and only if%
	\footnote{In view of \refer{thm}{event1}, by picking an increasing sequence of ratios converging to \half{}
	at an appropriate rate, we can create sequences whose subsum set is a Cantor set of any desired 
	Lebesgue measure
	$m<1$.
	}
	\begin{equation*}
		\rhos{n-1}<\recip{2}
	\end{equation*}
	and (equivalently)
	\item the \nth{} tail bounds the \nth{} term ($\xs{n}\leq\Xs{n}$) if and only if
	\begin{equation*}
		\rhos{n-1}\geq\half.
	\end{equation*}
\end{itemize}

So one way to create a sequence for which both possibilities occur infinitely often is to pick two
ratios, 
\begin{equation}\label{eqn:twoparam}
	0<\alpha<\half<\beta<1,
\end{equation} 
and to set
\begin{equation*}
	\rhos{i}=\begin{cases}
			 \alpha     & \text{ for even }i, \\
			 \beta     & \text{ for odd }i.
			\end{cases}
\end{equation*}
This leads to the sequence
\begin{equation}\label{eqn:bigeom}
	\begin{split}
		\xs{2k}&=\beta(1-\alpha)^{k}(1-\beta)^{k-1}\\
		\xs{2k+1}&=\alpha(1-\alpha)^{k}(1-\beta)^{k}.
	\end{split}
\end{equation}

A sequence defined in this way spiritually resembles a geometric sequence, except that it involves 
two distinct ratios, so we might refer to it as a \deffont{bi-geometric sequence}.  
\footnote{An obvious generalization of this idea, which could be called a \emph{multi-geometric sequence}, 
is one where the sequence of ratios \rhos{i} is periodic;  we could refer to a sequence for which 
$\rhos{i+m}=\rhos{i}$ 
for some fixed $m>0$ and all $i$ as an \emph{$m$-geometric sequence}.  
We shall deal only with bi-geometric sequences in this paper.}

We have seen three examples of bi-geometric sequences earlier in this paper.
The sequence 
defined by \refer{eqn}{alternate}
was constructed so that
\begin{equation*}
	\alpha=\frac{2}{5},\quad\beta=\frac{3}{5}
\end{equation*}
while both the Guthrie-Nymann and Kenyon examples have
\begin{equation*}
	\alpha=\frac{9}{20},\quad\beta=\frac{6}{11}.
\end{equation*}

The first observation above says that, in order to have a non-increasing sequence \seqx{},
we also need \alp{} and \bet{} to satisfy
\begin{align}
	\alpha&\leq\frac{\beta}{1-\beta}\label{eqn:nonincratio1}\\
	\beta&\leq\frac{\alpha}{1-\alpha}\label{eqn:nonincratio2}.
\end{align}
Note that, since we require $\beta>\half$, \refer{eqn}{nonincratio2} puts further limitations 
on the possible values of \alp{}:
\begin{align*}
	\half&<\beta\leq\frac{\alpha}{1-\alpha}\\
	\intertext{forces}
	1-\alpha&<2\alpha
\end{align*}
or
\begin{equation}\label{eqn:alp}
	\alpha>\recip{3}.
\end{equation}
By contrast, \refer{eqn}{nonincratio1} puts no further restrictions on \bet{}.

We can try to analyze the subsum set of a bi-geometric sequence by using the idea of an 
\emph{iterated function system} (\cite{Barnsley}, \cite{Fractals}).  Suppose we have
a sequence defined in terms of two parameters $\recip{3}<\alpha<\half<\beta<1$,
subject to \eqref{eqn:nonincratio1} and \eqref{eqn:nonincratio2},  by \refer{eqn}{bigeom}.
The sets \Cuts{0} and \Cuts{1} are the same, and we can describe the set \Cuts{2} in terms of 
the set $\Cuts{0}=\clint{0}{1}$ as the union of four intervals \Js{ij}, $i,j\in\single{0,1}$,
each of length $\Xs{2}=(1-\alpha)(1-\beta)$, with respective endpoints 
\begin{align*}
	\xs{00}&=0\\
	\xs{10}&=\xs{1}=\alpha\\
	\xs{01}&=\xs{2}=\beta(1-\alpha)\\
	\xs{11}&=\xs{1}+\xs{2}=\alpha+\beta-\alpha\beta.\\
\end{align*}
Each of these intervals can be obtained from the basic interval $\Cuts{0}=\clint{0}{1}$
by scaling and translation;  specifically, we can define four affine functions, all with the same scaling factor 
\begin{equation*}
	\lambda=(1-\alpha)(1-\beta):
\end{equation*}
\begin{align*}
	\vphisof{00}{x}&=\lambda x\\
	\vphisof{01}{x}&=\xs{2}+\lambda x\\
		&=\beta(1-\alpha)+\lambda x\\
	\vphisof{10}{x}&=\xs{10} +\lambda x\\
		&=\alpha +\lambda x\\
	\vphisof{11}{x}&=\xs{1}+\xs{2}+\lambda x\\
		&\alpha+(1-\alpha)\beta+\lambda x.
\end{align*}
Then it is easy to see that, in terms of our earlier notation,
\begin{equation*}
	\Js{\xis{i}\xis{j}}=\vphisof{ij}{\Cuts{0}}.
\end{equation*}
But  our recursive relations for \xs{k} and \Xs{k} repeat every two steps, and hence we 
get recursive definitions of the sets \Cuts{k} and \Js{\xi}: 
\begin{align*}
	\Cuts{2k}=\Cuts{2k-1}&=\bigcup_{i,j=0}^{1}\vphisof{ij}{\Cuts{2k-1}=\Cuts{2k-2}};\\
	\intertext{more specifically, for each word \xie{} of length $2k$ in zeroes and ones,
	if  its initial $2k-2$-word is $\tilde\xi$ and last two entries are $i,j\in\single{0,1}$, then}
	\Js{\xi=\tilde\xi ij}&=\vphisof{ij}{\tilde\xi}.
\end{align*}

The various overlaps between images of \vphis{01} and \vphis{10} make it difficult to carry out a
careful analysis of the sets \Cuts{n} in general.  However, one easy observation allows us
to conclude in certain cases that the set \Cuts{\infty} is a Cantor set.  At each stage, the
set-mapping
\begin{equation*}
	\Cuts{2k}\mapsto\Cuts{2k+2}=
		\vphisof{00}{\Cuts{2k}}\cup\vphisof{01}{\Cuts{2k}}
			\cup\vphisof{10}{\Cuts{2k}}\cup\vphisof{11}{\Cuts{2k}}
\end{equation*}
first scales \Cuts{2k} by the factor $\lambda=(1-\alpha)(1-\beta)$, 
duplicates four copies of the scaled version,
then lays them down (with some overlap).  Ignoring the overlap,
we can assert that the total of the lengths of the intervals making up \Cuts{2k+2} 
is less than $4\lambda$ times the corresponding measure for \Cuts{2k}.  In particular, 
the longest interval in \Cuts{2k} will have length at most $(4\lambda)^{k}$.  
This allows us to formulate
\begin{remark}
	Suppose \seqx{} is a bi-geometric sequence with ratios
	\begin{equation*}
		0<\alpha<\half<\beta<1
	\end{equation*}
	satisfying  \refer{eqn}{nonincratio2}. 
	
	If
	\begin{equation}\label{eqn:lambda}
		\lambda\eqdef(1-\alpha)(1-\beta)<\recip{4},
	\end{equation}
	then \Sigsum{} is a Cantor set.
\end{remark}

This shows in particular that our first example
yields a Cantor set,
since $\lambda=\frac{6}{25}<\recip{4}$.  By contrast, the Guthrie-Nymann and Kenyon examples both have
$\lambda=\frac{11}{20}\cdot\frac{5}{11}=\recip{4}$.

In \refer{fig}{bigeom} we have sketched the parameter space $(\alpha,\beta)\in\clint{0}{1}\times\clint{0}{1}$ for bi-geometric sequences.  
Our discussion above concerned the upper-left quarter of this square, \rect{0}{\half}{\half}{1}, 
characterized by the inequalities \eqref{eqn:twoparam}, but by interchanging the roles of \alp{} 
and \bet{} where necessary we can extend it to the whole square.
The hatched areas are excluded by the requirement that the sequence \seqx{} be non-decreasing
(\refer{eqn}{nonincratio1} and \eqref{eqn:nonincratio2}).
The upper gray area is where \refer{eqn}{lambda} holds,
guaranteeing that \Sigsum{} is a Cantor set.  
Note that the two examples of Cantorvals (Guthrie-Nymann and Kenyon)
both correspond to a point on the boundary of this region, where $\lambda=\recip{4}$.

The lower gray area is where both \alp{} and \bet{} are at most equal to \half{}, which means the
tail always bounds the term---so $\Cuts{n}=\clint{0}{1}$ for all $n$.
This leaves the two white
regions (labeled with a question marks) where one ratio is at most \half{} 
while the other is greater than \half{}, 
where our analysis so far cannot completely determine the topology of the subsum set;  however, we do
know that in this region the subsum set has infinitely many components, so for each bi-geometric sequence
coming from parameters in this interval, the subsum set is either a Cantor set
or a symmetric Cantorval.  However we have not developed a test to distinguish, in general, which 
possibility a particular example exhibits.  In fact, I don't know if there are Cantorval examples 
with $\lambda>\recip{4}$, or, in the other direction, if there are any bi-geometric sequences with parameters
in the white region which yield Cantor sets.

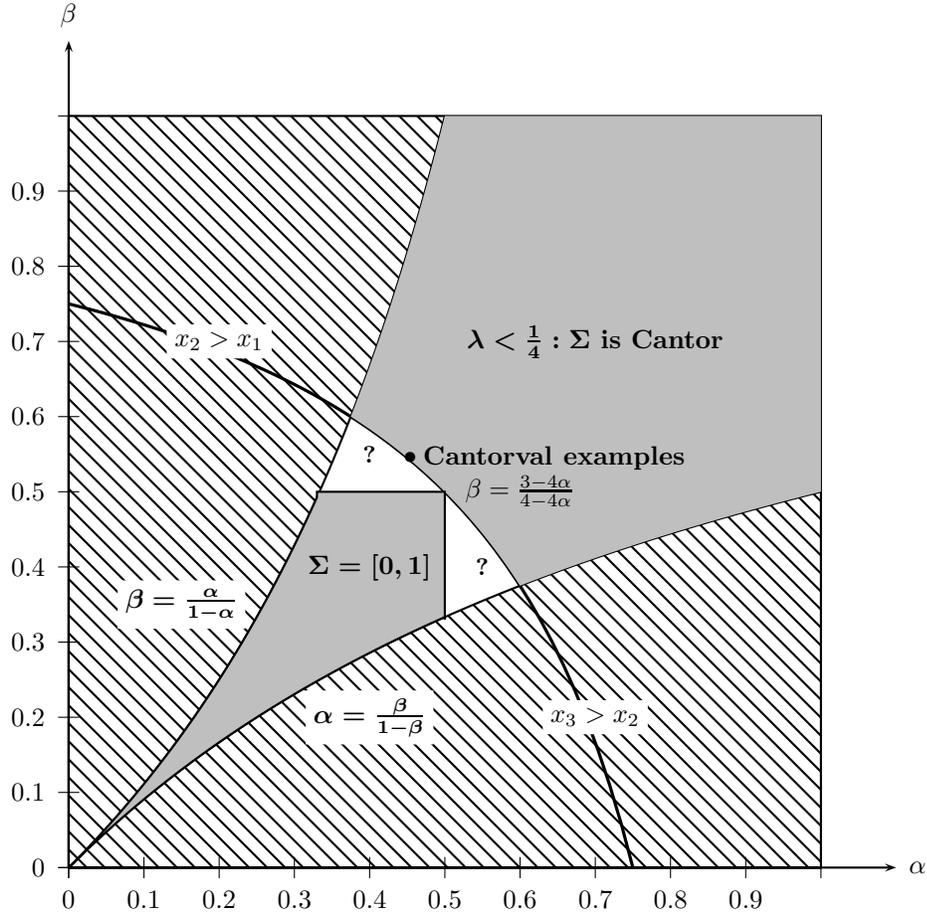
\begin{figure}[htbp]
\begin{center}
	\begin{pspicture}(-0.5,-0.5)(10,11.5)
	\psset{unit=10}
		\psaxes[axesstyle=frame, Dx=0.1, Dy=0.1](0,0)(1,1)
		\psline{->}(0,0)(1.1,0)\uput[r](1.1,0){$\alpha$}
		\psline{->}(0,0)(0,1.1)\uput[u](0,1.1){$\beta$}

		\pscustom[linestyle=none, fillstyle=vlines, fillcolor=gray]{
			\psplot{0}{0.5}{x 1 x sub div}
			\psline(0.5,1.0)(0,1.0)(0,0.5)
		}
		
		\pscustom[linestyle=none, fillstyle=vlines, fillcolor=gray]{
			\psplot{1.0}{0}{x 1 x add div}
			\psline(0,0)(1,0)(1,0.5)
		}
		
		\psplot{0}{0.5}{x 1 x sub div}
		\psplot[linewidth=1.2pt]{0}{0.75}{3 4 x mul sub 4 4 x mul sub div}
		\psplot{0}{1}{x 1 x add div}
		\rput*(0.4,0.2){$\boldsymbol{\alpha=\frac{\beta}{1-\beta}}$}
		
		\rput*(0.15,0.35){$\boldsymbol{\beta=\frac{\alpha}{1-\alpha}}$}
		
		\pscustom[fillstyle=solid, fillcolor=lightgray, linestyle=none]{%
			\psplot{0.5}{.375}{x 1 x sub div}
			\psplot{.375}{0.6}{3 4 x mul sub 4 4 x mul sub div}
			\psplot{0.6}{1.0}{x 1 x add div}
			\psline(1,0.5)(1,1)(0.5,1)
		}
		
		\pscustom[fillstyle=solid, fillcolor=lightgray]{
			\psplot{0}{0.5}{x 1 x add div}
			\psline(0.5,0.33)(0.5,0.5)(0.33,0.5)
			\psplot{0.33}{0}{x 1 x sub div}
		}

		\rput(0.4,0.55){\textbf{\small{?}}}
		\rput(0.55,0.4){\textbf{\small{?}}}
		\rput(0.4,0.4){$\boldsymbol{\Sigma=\clint{0}{1}}$}
		\rput(0.7,0.7){$\boldsymbol{\lambda<\recip{4}: \Sigma}$\textbf{ is Cantor}}
		
		\rput(0.454,0.545){$\bullet$}
		\uput[r](0.454,0.545){\textbf{Cantorval examples}}

		\rput*(0.2,0.7){$\xs{2}>\xs{1}$}
		\rput*(0.7,0.2){$\xs{3}>\xs{2}$}
		\rput(0.6,0.5){$\beta=\frac{3-4\alpha}{4-4\alpha}$}
		
	\end{pspicture}
\caption{Bi-geometric sequences}
\label{fig:bigeom}
\end{center}
\end{figure}
\section{Sequences with Varying Sign}\label{sec:vary}

We turn now to the general case, when some terms are positive while others are negative.  
Here we take advantage of another observation, given by
Riemann in \cite{Riemann:trig}  but attributed by him to Dirichlet: let us separate out the positive terms of \xs{i} as
\single{\xsp{i}} and the negative terms as \single{\xsm{i}}.  
Since the terms of each of these two sums have constant sign, we can define
\begin{align*}
	\sum\xsp{i}&=X^{+}\in\clint{0}{\infty}\\
	\sum\xsm{i}&=X^{-}\in\clint{-\infty}{0}.
\end{align*}

We can distinguish three possible configurations:

\begin{itemize}
	\item If both $X^{-}$ and $X^{+}$ are finite, the sequence is \textbf{absolutely summable}
	($\sum_{i=0}^{\infty}\abs{\xs{i}}	$ converges), because 
	\begin{equation*}
		\sum_{i=0}^{\infty}\abs{\xs{i}}\leq\abs{X^{-}}+X^{+}.
	\end{equation*}
	Recall that as a consequence every reordering of the sequence sums to the same
	(finite) number.
	
	\item If both $X^{-}$ and $X^{+}$ are infinite, the sequence is \textbf{conditionally summable}.
	It is a standard fact (attributed to Riemann) that 
	if a series converges while the corresponding series of absolute
	values diverges, then by rearranging the order of the terms we can get a series summing to
	any real number, or diverging to either $+\infty$ or $-\infty$. 
	Riemann's informal proof of this fact \cite[\S3]{Riemann:trig} rests on the observation
	that in this case both $X^{-}$ and $X^{+}$ are infinite.
	
	\item If one is finite and the other infinite, we will call the sequence \textbf{unconditionally 
	unsummable}.  
	In this case, \emph{every} reordering gives rise to a divergent series;  for example, if
	$X^{+}=\infty$ and $X^{-}$ is finite, then a partial sum of positive terms can be made
	arbitrarily large, while including  negative terms as well can at worst lower this sum
	by \abs{X^{-}}, so any rearrangement diverges to $\infty$.
	
\end{itemize}

In the absolutely summable case, Kakeya \cite{Kakeya} stated without proof that \Sigsum{} equals the 
interval \clint{X^{-}}{X^{+}} if and only if all the tails bound the sums for the sequence of absolute values
\single{\abs{\xs{k}}}.  Hornich \cite{Hornich} took this further: again assuming that the sequence is absolutely 
summable (so both $X^{\pm}$ are finite), and given a subsequence \single{\ys{i}} of our sequence, consider 
the translated sum of its absolute values
\begin{equation*}
	\sum\abs{\ys{i}}+X^{-}=\sum\abs{\ysp{i}}+\sum\abs{\ysm{i}}+X^{-}
		=\sum\ysp{i}+\sum\abs{\ysm{i}}-\sum_{k}\abs{\xsm{k}}
\end{equation*}
where the last summand is the sum of the absolute values of \emph{all} 
the \emph{negative} terms of the original sequence.
If we combine the last two sums, the terms \ys{i} in the subsequence get cancelled, leaving
the sum of all the negative terms which are \emph{excluded}
from the subsequence.  This of course is another subsum of our sequence.  Furthermore, \emph{every} subsum 
of the full sequence can be expressed in this way, which shows that the subsum set of the 
(absolutely summable) sequence \seqx{} is the translate by $X^{-}$ of the subsum set of the sequence
\single{\abs{\xs{i}}} of absolute values.  
\begin{prop}[Hornich]\label{prop:translate}
	If \seqx{} is an absolutely summable sequence, then
	\begin{equation*}
		\Sigsum=\Sigof{\single{\abs{\xs{k}}}_{k=1}^{\infty}}+X^{-}.
	\end{equation*}
\end{prop}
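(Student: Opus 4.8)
The plan is to realize $\Sigof{\single{\abs{\xs{k}}}_{k=1}^{\infty}}+X^{-}$ and $\Sigsum$ as the images of the same family of index sets under two different value-assignments, and to exhibit an index-set involution carrying one assignment to the other. Absolute summability is what makes every regrouping below legitimate, since all the series in sight then converge absolutely and independently of order; in particular every subsequence of $\seqx$ and of $\single{\abs{\xs{k}}}_{k=1}^{\infty}$ is summable, so both subsum sets are honestly defined.

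First I would fix a subsequence $\single{\ys{i}}$ of $\seqx$, record its positive part $\single{\ysp{i}}$ and negative part $\single{\ysm{i}}$, and compute $\sum\abs{\ys{i}}+X^{-}$. Writing $X^{-}=-\sum_{k}\abs{\xsm{k}}$ and regrouping $\sum\abs{\ysm{i}}-\sum_{k}\abs{\xsm{k}}$, the negative terms that occur in the subsequence cancel, leaving $-\sum\abs{\xsm{k}}$ summed only over the negative terms of $\seqx$ that are \emph{excluded} from the subsequence. Hence $\sum\abs{\ys{i}}+X^{-}$ equals the sum of the positive terms of $\seqx$ that are \emph{included} in the subsequence together with the negative terms that are \emph{excluded} from it --- which is visibly a subsum of $\seqx$. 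This yields the inclusion $\Sigof{\single{\abs{\xs{k}}}_{k=1}^{\infty}}+X^{-}\subseteq\Sigsum$.

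For the reverse inclusion I would observe that the operation on index sets implicit in the previous step --- keep each positive index exactly as it appears, but include a negative index precisely when it was \emph{absent} before --- is an involution on the collection of subsets of indices. Consequently an arbitrary subsum $\sum_{k\in S}\xs{k}$ of $\seqx$ is produced by applying the formula above to the subsequence whose index set is the image of $S$ under this involution, giving $\Sigsum\subseteq\Sigof{\single{\abs{\xs{k}}}_{k=1}^{\infty}}+X^{-}$ and hence equality. There is no serious obstacle here; the only point requiring care is the convergence bookkeeping, namely that splitting a subsum of absolute values into its positive and negative parts and cancelling against the separately convergent series $\sum_{k}\abs{\xsm{k}}$ is valid exactly because absolute summability makes every such manipulation order-independent.
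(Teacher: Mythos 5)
Your proof is correct and follows essentially the same route as the paper: translate the sum of absolute values of a subsequence by $X^{-}$, cancel the included negative terms against the full negative sum, and observe that the resulting correspondence (keep positive indices, complement negative indices) is a bijection on index sets. The only difference is that you make the involution explicit where the paper merely asserts that every subsum arises this way; that is a welcome clarification, not a new argument.
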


This means that the criteria we gave in \refer{thm}{event}, \refer{prop}{eventtailbds} and \refer{cor}{gaps}
can be applied to the (positive) sequence of absolute values to determine the topology of the subsum set of the 
original, variable sign but absolutely summable sequence.

Finally, if our original sequence is not absolutely summable, we can easily specify the subsum set.  In this
case we know that at least one of $X^{\pm}$ is infinite.  We concentrate on the case $X^{+}$ infinite;  the other
case is analogous.  Since the subsequence of positive terms is not summable, by \refer{thm}{posdiv}
$\Sigof{\single{\xsp{i}}}=\ropint{0}{\infty}$: we can obtain any positive number as the sum of a subsequence
of \emph{positive} terms.  If $X^{-}$ is finite, we can obtain any number in \ropint{X^{-}}{\infty} by 
adding a positive number to $X^{-}$;  if it is infinite, we can obtain any \emph{negative} number as the 
sum of some subsequence of \emph{negative} terms---so $\Sigsum=\Reals$ in this case.

With a little abuse of notation and sneaky reinterpretation, we can formulate a general characterization 
of all subsum sets. 

The abuse of notation is that we will allow closed interval notation with one or both endpoints infinite;  
it will be understood that in such a case the square bracket at that end 
should be replaced by a round parenthesis.

The sneaky reinterpretation is simply this:  if a positive sequence is not summable, then \emph{every} 
``tail'' is infinite, so bounds any term.

With these tweaks, we can state a general result, extending \refer{thm}{classify}:

\begin{theorem}
	Given a null sequence $\xs{k}\to0$, let $X^{+}$ \resp{$X^{-}$} be the (possibly infinite) sum of all the
	positive \resp{negative} terms.  Then the subsum set \Sigsum{} is a closed, perfect set whose 
	convex hull is the interval \clint{X^{-}}{X^{+}}, and which is symmetric with respect to reflection
	across the midpoint of this interval.
	
	Furthermore, denote the sequence of absolute values of our terms by 
	\begin{equation*}
		\as{k}=\abs{\xs{k}},\quad k=1,2,\dots
	\end{equation*}
	and its tails by
	\begin{equation*}
		\As{k}=\sum_{i>k}\as{k}.
	\end{equation*}
	Then:
	\begin{enumerate}
		\item If the tail bounds the term
		\begin{equation*}
			\as{k}\leq\As{k}
		\end{equation*}
		for all $k>K$, and the number of terms which exceed the tail
		\begin{equation*}
			\as{k}>\As{k}
		\end{equation*}
		is $N$, 
		then \Sigsum{} is the union of between $2^{N}$ and $2^{K}$ disjoint closed intervals.
		
		\item If the term exceeds the tail infinitely often, then \Sigsum{} is either a Cantor set
		or a symmetric Cantorval.  In particular, if the term always exceeds the tail, then \Sigsum{}
		is a Cantor set.
	\end{enumerate}
\end{theorem}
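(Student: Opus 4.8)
The plan is to reduce everything to the positive, summable case already settled in \refer{thm}{subsumset} through \refer{thm}{classify}, splitting according to whether the sequence is absolutely summable. First suppose it is, so that $X^{+}$ and $X^{-}$ are both finite. Then $\as{k}=\abs{\xs{k}}$ is a positive summable null sequence, and \refer{prop}{translate} identifies \Sigsum{} with the translate $\Sigof{\single{\as{k}}_{k=1}^{\infty}}+X^{-}$. Since translation by $X^{-}$ is a homeomorphism of \Reals{} carrying intervals to intervals, every topological conclusion about the positive subsum set transfers verbatim. In particular \refer{thm}{subsumset} makes $\Sigof{\single{\as{k}}_{k=1}^{\infty}}$ perfect with convex hull $\clint{0}{X^{+}-X^{-}}$ and symmetric under $x\mapsto(X^{+}-X^{-})-x$; translating by $X^{-}$ turns the hull into \clint{X^{-}}{X^{+}} (using $X^{+}-X^{-}=\sum_{k}\as{k}$) and the reflection into $y\mapsto X^{+}+X^{-}-y$, i.e. reflection across the midpoint $\tfrac{X^{+}+X^{-}}{2}$. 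One care point, exactly the one flagged in the footnote to \refer{lem}{gaps}, is that before invoking any term-versus-tail criterion I would first reorder $\single{\as{k}}_{k=1}^{\infty}$ to be non-increasing: this leaves the subsum set (hence \Sigsum) unchanged, but it is essential because the comparison between $\as{k}$ and $\As{k}$ depends on the ordering, and the counts $K$ and $N$ in the statement must be read in that order.

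The two enumerated items, still in the absolutely summable case, then follow by transporting the positive-sequence results through the same translation. For item (1), if $\as{k}\leq\As{k}$ for all $k>K$ then \refer{prop}{eventtailbds} applied to $\single{\as{k}}$ shows $\Sigof{\single{\as{k}}}=\Cuts{K}$ is a union of at most $2^{K}$ disjoint closed intervals, while the $N$ indices with $\as{k}>\As{k}$ force at least $2^{N}$ components by \refer{prop}{gaps}\eqref{prop:gaps2}; translating by $X^{-}$ gives the asserted count for \Sigsum. For item (2), if $\as{k}>\As{k}$ infinitely often then \refer{cor}{gaps} rules out a finite union of intervals, so \refer{thm}{classify} leaves only a Cantor set or a symmetric Cantorval, and both of those defining properties are translation-invariant; and if $\as{k}>\As{k}$ for \emph{every} $k$ then \refer{thm}{event}(\ref{thm:event1}) gives a Cantor set outright.

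Finally suppose the sequence is not absolutely summable, so at least one of $X^{\pm}$ is infinite. Here the description is the direct one from \refer{sec}{vary}, resting on \refer{thm}{posdiv}: when $X^{+}=\infty$ but $X^{-}$ is finite, the positive terms already realize every value in \ropint{0}{\infty}, and adjoining a subsum of the (summable) negative terms sweeps out exactly \ropint{X^{-}}{\infty}; symmetrically when $X^{-}=-\infty$ and $X^{+}$ is finite; and when both are infinite $\Sigsum=\Reals$. Under the interval conventions announced just before the theorem, each of these is the closed interval \clint{X^{-}}{X^{+}}, which is closed, perfect, and its own convex hull; the reflection symmetry is literal for \Reals{} and degenerate (the midpoint lying at infinity) for a half-line, to be read under those same conventions. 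Using the stated reinterpretation that every tail of a non-summable positive sequence is infinite and hence bounds every term, this case also sits consistently inside item (1) with $N=K=0$, a single interval.

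The hard part here is not any single estimate but the bookkeeping and the conventions. I expect the main obstacles to be: keeping the non-increasing ordering explicit so that the counts $N$ and $K$ are well defined and match the indexing of \refer{prop}{eventtailbds} and \refer{prop}{gaps} (whose roles of ``stage'' and ``count'' are the reverse of $K$ and $N$ here); confirming that both endpoints of the bounds $2^{N}$ and $2^{K}$ genuinely come from two different earlier results (the upper bound from the interval $\Cuts{K}$, the lower bound from the gaps created by terms exceeding tails); and stating the perfectness, convex-hull, and symmetry claims for unbounded \Sigsum{} precisely enough that the single uniform assertion \clint{X^{-}}{X^{+}} stays honest.
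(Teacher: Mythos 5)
Your proposal is correct and takes essentially the same route as the paper: it reduces the absolutely summable case to the positive theory via Hornich's translation proposition (\refer{prop}{translate}) and transports the earlier classification results, and it disposes of the non-absolutely-summable case using \refer{thm}{posdiv} together with the stated conventions on infinite intervals and tails. Your explicit attention to the non-increasing reordering of the absolute values and to the reversed roles of $K$ and $N$ relative to \refer{prop}{eventtailbds} and \refer{prop}{gaps} is, if anything, more careful bookkeeping than the paper's own discussion.
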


\section{Generalizations}
We comment briefly on two extensions of the material discussed in this paper.

First, Rafe Jones \cite{RJones} considers non-null real sequences.  
Several new phenomena are possible in this context.  If the sequence converges to a nonzero limit, then its
subsum set is a countable, unbounded set; in fact, \cite[Prop. 4.1]{RJones} any sequence 
possessing no null subsequences
has a countable subsum set.  
Jones notes \cite[p. 514]{RJones} that in general the subsum set of a non-null sequence need not be closed
(for example, \Sigof{\frac{n+1}{n}} has $1$ as an accumulation point, but does not contain it).
In general, the subsum set of any sequence is either meager 
(\ie{} of first Baire category, and hence totally disconnected), 
or else its interior is a dense subset \cite[Theorem 3.1]{RJones}. 
If it is neither countable nor an unbounded interval,
then it consists of a countable union of translates of some null subsequence \cite[Prop. 3.2]{RJones}.
  
A second extension, referenced by Jones, is the work of Manuel Mor\'an \cite{Moran1, Moran2} which considers
subsum sets of sequences in higher dimensions, in particular of complex sequences, under an assumption 
(``quick convergence'') analogous to our ``terms exceed tails'' condition.  
In this context, Mor\'an studies the Hausdorff dimension of the fractal sets generated by families sequences obtained
from analytic functions.

\bibliography{Subsequence}
\bibliographystyle{plain}

\end{document}